\theoremstyle{plain}
\newtheorem{prop}{Proposition}[section]
\newtheorem{coro}[prop]{Corollary}
\newtheorem{conj}[prop]{Conjecture}
\newtheorem{lemm}[prop]{Lemma}
\newtheorem{thm}[prop]{Theorem}
\theoremstyle{definition}
\newtheorem{defn}[prop]{Definition}
\newtheorem{rem}[prop]{Remark}
\DeclareMathOperator{\sign}{sign}
\DeclareMathOperator{\br}{br}
\def\mcg#1;#2{\Gamma_{#1,#2}}
\def\fg#1;#2{\Pi_{#1,#2}}
\def\tb#1;#2{\mathscr{K}_{\frac{#1}{#2}}}
\begin{document}

\title[On The Jones Polynomial of Quasi-alternating Links]
{On The Jones Polynomial of Quasi-alternating Links}

\keywords{quasi-alternating links, Jones polynomial, Montesinos links, 3-braids}
\thanks{The first author was supported by a  research grant from United Arab Emirates University, UPAR grant $\#$G00002650.}

\author{Nafaa Chbili}
\address{Department of Mathematical Sciences\\ College of Science\\ UAE University \\ 15551 Al Ain, U.A.E.}
\email{nafaachbili@uaeu.ac.ae}
\urladdr{http://faculty.uaeu.ac.ae/nafaachbili}

\author{Khaled Qazaqzeh}
\address{Department of Mathematics\\ Faculty of Science \\ Kuwait University\\
P. O. Box 5969\\ Safat-13060, Kuwait, State of Kuwait}
\email{khaled@sci.kuniv.edu.kw}

\date{23/05/2018}

\begin{abstract}
We prove that twisting any quasi-alternating link $L$ with  no gaps in its Jones polynomial $V_L(t)$  at the crossing where it is
quasi-alternating produces a link $L^{*}$  with  no gaps in its Jones polynomial $V_{L^*}(t)$.
This leads us to conjecture that the Jones polynomial of any prime quasi-alternating link, other than
$(2,n)$-torus links, has no gaps. This would give a new property of quasi-alternating links
and a simple obstruction criterion for a link to be quasi-alternating. We prove that the conjecture
holds for quasi-alternating Montesinos links as well as quasi-alternating links with braid index 3.

\end{abstract}

\maketitle

\section{introduction}

The class of quasi-alternating links was introduced first by
Ozsv$\acute{a}$th and Szab$\acute{o}$ in \cite{OS} as a natural generalization of the class of alternating links.
This class is defined recursively as follows:
\begin{defn}\label{def}
The set of quasi-alternating links $\mathcal{Q}$  is the smallest
set satisfying the following properties.
\begin{itemize}
  \item The unknot belongs to $\mathcal{Q}$.
  \item If $L$ is a link with a diagram $D$ containing a crossing $c$ such that
\begin{enumerate}
\item both smoothings of the diagram $D$ at the crossing $c$, $L_{0}$ and $L_{\infty}$
as given in Figure \ref{figure} belong to $\mathcal{Q}$,
\item $\det(L_{0}), \det(L_{\infty}) \geq 1$,
\item $\det(L) = \det(L_{0}) + \det(L_{\infty})$; then $L$ is in $\mathcal{Q}$
and in this case we say that  $L$ is quasi-alternating at the
crossing $c$ with quasi-alternating diagram $D$.
\end{enumerate}
\end{itemize}
\end{defn}

\begin{figure} [h]
\begin{center}
\includegraphics[scale=0.35]{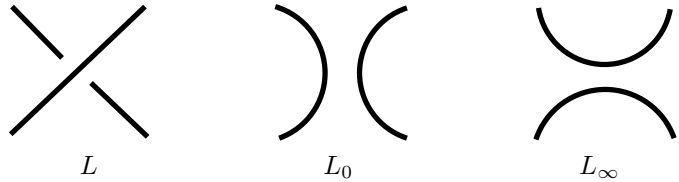} \\
{\hspace{0.2cm}$L$}\hspace{3cm}{$L_0$}\hspace{3cm}$L_{\infty}$
\end{center}
\caption{The diagram of the link $L$ with  crossing $c$ and its
smoothings $L_{0}$ and $L_{\infty}$ respectively.}\label{figure}
\end{figure}

Using this definition, one can easily see that any non-split alternating link is quasi-alternating at any crossing in any
reduced alternating diagram. The knot $8_{20}$ is the first example of quasi-alternating non-alternating knot in the knot table.
It  is worth mentioning here that it  is impossible to determine that a given link is not quasi-alternating using
the above recursive definition since one has to consider all crossings in all possible diagrams of this given link.

A different approach to address this problem
is to study the behavior of the invariants of quasi-alternating
links in order to find obstruction criteria for a link to be
quasi-alternating. Several such obstruction criteria have been introduced over the past fifteen years.
For instance, it was shown that these links are homologically thin  in  both Khovanov and link Floer homology \cite{MO,OS,ORS}.
In addition, the Heegaard Floer Homology of their  branched double  covers
depends only on the determinant of the link,
\cite{OS}.  On the other hand, a simple obstruction criterion
 has been introduced in terms of the degree of the $Q$-polynomial and the determinant of the link, \cite{QC}.
This obstruction has been sharpened by Teragaito in \cite{Te1}, then  extended by the same author
to the two-variable Kauffman polynomial \cite{Te2}.

A simple way to produce new examples of quasi-alternating links from old ones was introduced by Champanerkar and
Kofman \cite{CK}. Given a link $L$ with quasi-alternating diagram $D$ at a crossing $c$. Then any link diagram
obtained from $L$ by replacing the crossing $c$ by an alternating rational tangle of the same type is quasi-alternating
at any of the new crossings.

This construction was  generalized  to links obtained by replacing a crossing by a product of rational tangles  and
applied to study quasi-alternating Montesinos links \cite{QCQ}. In this paper, we investigate how does the Jones
polynomial interact with this twisting  property. More precisely, we prove that if the
Jones polynomial of a quasi-alternating link $L$ has no gaps, then so is the Jones polynomial of any  link $L^{*}$
obtained from it by replacing the quasi-alternating crossing by a product of rational tangles.
Based on this fact, in addition to  other computational  evidences, we conjecture that if $L$ is a prime quasi-alternating
link other than the $(2,n)$-torus link, then its Jones polynomial $V_L(t)$ has no gaps. It is well known that this condition
is satisfied by alternating links as it was proved in \cite{Th}.

This paper is organized as follows. In Section 2, the main theorem is proved and   a new obstruction for a link to be quasi-alternating
is conjectured.  This conjecture is proved for quasi-alternating Montesinos links in Section 3. In Section 4, we  prove that the Conjecture holds  for quasi-alternating links of braid index 3.

\section{Main Conjecture and Some Consequences}

The Jones polynomial $V_L(t)$ is an invariant of oriented links. It is a Laurent polynomial with integral coefficients that might be defined in several ways.
In this section, we shall briefly introduce this  polynomial and review some of its properties needed in the sequel. Let us first define the Kauffman bracket polynomial.

\begin{defn}
The Kauffman bracket polynomial is a function from unoriented link diagrams in the oriented plane to the ring of Laurent polynomials with integer coefficients in an indeterminate $A$. It maps a diagram $D$ to $\left\langle D\right\rangle\in \mathbb Z[A^{-1},A]$ and it is  defined by the following realtions:
\begin{enumerate}
\item $\left\langle \bigcirc \right\rangle=1$,
\item $\left\langle \bigcirc \cup D\right\rangle=(-A^{-2}-A^2)\left\langle D\right\rangle$,
\item $\left\langle L\right\rangle=A\left\langle L_0\right\rangle+A^{-1}\left\langle L_\infty\right\rangle$,
\end{enumerate}
where $\bigcirc$ denotes a trivial circle, and  $L,L_0, \text{and }L_\infty$ represent unoriented link diagrams that are identical except in a small region
 where they look as in Figure \ref{figure}.
\end{defn}
\begin{defn}
The Jones polynomial $V_L(t)$ of an oriented link $L$ is the Laurent polynomial in $t^{1/2}$ with integer coefficients defined by
\begin{equation*}
V_L(t)=((-A)^{-3w(D)}\left\langle D\right\rangle)_{t^{1/2}=A^{-2}}\in \mathbb Z[t^{-1/2},t^{1/2}],
\end{equation*}
where the writhe of the oriented diagram $D$, denoted by $w(D)$, is the number of crossings of the first type minus
the number of crossings of second type as pictured  in Figure \ref{Diagram1}.
\end{defn}

\begin{figure}[h]
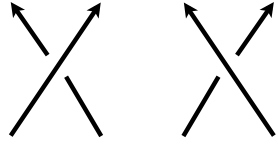

	\centering
		\includegraphics[scale=0.12]{Diagram1}\hspace{1cm}\reflectbox{\includegraphics[scale=0.12]{Diagram1}}
	\caption{Crossings of the first and second type respectively}
	\label{Diagram1}
\end{figure}

We always can write the Jones polynomial of any link $L$ as follows:
\[
V_{L}(t)=t^{r}\displaystyle\sum_{i=0}^{m} a_it^i,
\text{where} \ m \geq 0, a_0 \neq 0  \ \text{and} \ a_m \neq 0.
\]
Therefore, the Kauffman bracket of the diagram $D$ of the link $L$ can be written as follows:
\[
\langle D \rangle(A)=A^{s}\displaystyle\sum_{i=0}^{m} a_iA^{4i},
\text{where} \ m \geq 0, a_0 \neq 0  \ \text{and} \ a_m \neq 0.
\]
We aim  to study  the Jones polynomial of quasi-alternating links. In  \cite{MO}, it was proved  that
the reduced ordinary Khovanov homology group of any quasi-alternating link is thin. Consequently, the Jones polynomial
of any quasi-alternating link is alternating. In other
words, its coefficients satisfy  $a_{i}a_{i+1} \leq 0$ for $i = 0,1,\ldots, m-1$. This inequality is strict  for prime
alternating links, other than $(2,n)$-torus links, as it was shown in \cite[Theorem\,1(iv)]{Th}. In this case,
we say that Jones polynomial has no gaps. It is easy to see that the Jones polynomial has no gaps if and only if
the Kauffman bracket satisfies $a_{i}a_{i+1} \neq 0$, where $i = 0,1,\ldots, m-1$.

It is known that quasi-alternating links  share the same homological properties with alternating links.
A natural question is to ask whether  we can  extend the above result about the Jones polynomial
to  the class of prime quasi-alternating links, other than $(2,n)$-torus links.
We conjecture the following.
\begin{conj}\label{main}
If $L$ is a prime quasi-alternating link, other than $(2,n)$-torus link, then the coefficients of the Jones polynomial
of $L$ satisfy  $a_{i}a_{i+1} < 0$ for all $0\leq i \leq m-1$.
\end{conj}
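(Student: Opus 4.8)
The plan is to treat the conjecture exactly as one treats the analogous statement for alternating links proved in \cite{Th}, but to drive the argument through the recursive structure in Definition \ref{def} rather than through a reduced diagram. Since the thinness result of \cite{MO} already guarantees that $V_L(t)$ is alternating (so that $a_ia_{i+1}\le 0$), the entire content of the conjecture is the \emph{strict} inequality: no interior coefficient $a_i$ vanishes. Equivalently, writing the Kauffman bracket in the normal form $\langle D\rangle(A)=A^{s}\sum_{i=0}^{m}a_iA^{4i}$, the support $\{\,i : a_i\neq 0\,\}$ must be the full interval $\{0,1,\dots,m\}$ with no omitted value. I would therefore aim to prove the sharper statement that the support of the Kauffman bracket of a quasi-alternating diagram is a contiguous block of exponents.

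First I would record the standard fact that the Kauffman bracket is homogeneous modulo $4$: all monomials of $\langle D\rangle$ lie in a single residue class mod $4$, since resolving the two smoothings at any crossing produces state contributions whose $A$-exponents differ by a multiple of $4$. Applying this to the skein relation $\langle L\rangle=A\langle L_0\rangle+A^{-1}\langle L_\infty\rangle$ forces the supports of the two summands $A\langle L_0\rangle$ and $A^{-1}\langle L_\infty\rangle$ to occupy the \emph{same} residue class mod $4$; hence the bracket of $L$ is literally the superposition of two arithmetic progressions of exponents with common step $4$. This is the mechanism I would exploit.

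The main step is an induction on the determinant, which is legitimate because the quasi-alternating condition gives $\det(L)=\det(L_0)+\det(L_\infty)$ with both terms strictly smaller. At $A^{4}=-1$ (equivalently $t=-1$) the alternating property aligns the signs within each summand, and the determinant additivity says precisely that $A\langle L_0\rangle$ and $A^{-1}\langle L_\infty\rangle$ add \emph{coherently}, with no cancellation of the evaluations. I would leverage this coherence to show that wherever the two progressions overlap the corresponding coefficients share a sign and therefore reinforce rather than cancel; combined with the inductive hypothesis that each of $L_0,L_\infty$ is gapless, this yields that $L$ is gapless as soon as one knows that the union of the two exponent-progressions is itself an unbroken interval.

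The hard part, and the reason the statement is only a conjecture, is exactly this last contiguity: I must bound the relative shift between the two progressions by their breadths so that no gap opens between the block coming from $L_0$ and the block coming from $L_\infty$. Controlling this shift amounts to controlling breadth against determinant, and it is precisely here that the $(2,n)$-torus links --- whose brackets are genuinely gap-ridden --- must be excluded, and where primeness is needed to prevent the two blocks from drifting apart. A further obstruction is that the smoothings $L_0,L_\infty$ need not be prime, and may be split, composite, or themselves $(2,n)$-torus links, so the inductive hypothesis cannot be invoked verbatim. For this reason the realistic route, and the one I would pursue, is first to establish the controlled special case in which the crossing is replaced by an alternating rational tangle (the twisting construction of \cite{CK}), where the breadth increase is explicit, and then to verify the conjecture outright for families admitting closed-form Jones polynomials --- the Montesinos links and the closures of $3$-braids --- leaving the fully general contiguity statement open.
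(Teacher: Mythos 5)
The statement you are asked to prove is Conjecture~\ref{main}, and the paper does not prove it: it remains open there, supported only by partial results. Your proposal is honest about exactly this, and the program you land on in your final paragraph --- prove that twisting a gapless quasi-alternating link at the quasi-alternating crossing preserves gaplessness, verify the statement for Montesinos links and for closed $3$-braids, and leave the general case open --- is precisely what the paper does (Theorem~\ref{new} plus Sections~3 and~4, together with a computational check through $12$ crossings). So for the part of your argument that actually closes, you are on the paper's route.

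Your additional sketch of an induction on the determinant is a genuinely different idea from anything in the paper, but as you yourself concede it does not constitute a proof, and it is worth being precise about where it breaks. First, the inductive hypothesis is not inherited: the smoothings $L_0$ and $L_\infty$ of a quasi-alternating link at the distinguished crossing are quasi-alternating but need not be prime, need not be connected sums of gapless pieces, and can perfectly well be $(2,n)$-torus links (this already happens for the twisting construction of \cite{CK}, where one smoothing is a $(2,k)$-torus link whose bracket is maximally gapped), so ``each of $L_0,L_\infty$ is gapless'' is simply false as an induction hypothesis. Second, the coherence you extract from $\det(L)=\det(L_0)+\det(L_\infty)$ is an identity of evaluations at $A^4=-1$; by itself it only rules out \emph{total} cancellation, and upgrading it to the coefficientwise statement that overlapping monomials of $A\langle L_0\rangle$ and $A^{-1}\langle L_\infty\rangle$ share a sign requires the thinness/alternating input for both pieces and a careful degree bookkeeping --- this is exactly the delicate part of the proof of Theorem~\ref{new}, where the paper avoids the general problem by arranging one summand to be $A^{n-1}\langle L\rangle$ (gapless by hypothesis) and the other to be an explicit geometric series times $\langle L_\infty\rangle$. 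Third, and most importantly, the contiguity of the union of the two exponent progressions is the whole content of the conjecture, and neither primeness nor the exclusion of torus links enters your argument in any operative way; you name them as the places where the hypotheses ``must be used'' without a mechanism. None of this is a criticism of your final, scaled-back plan, which matches the paper; but the inductive route as sketched cannot be repaired into a proof of the general statement without new ideas.
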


If this conjecture is true, then we will  get  a positive solution of  \cite[Conjecture\,3.8]{QC} that states that
$\det(L) \geq \br(L)$, where $\det(L)$ and $\br(L)$ denote the
determinant and the breadth of the Jones polynomial of the  quasi-alternating link $L$, respectively.
The last conjecture is a weaker version of \cite[Conjecture\,1.1]{QQJ} which  states that $\det(L) \geq c(L)$, where $c(L)$ denotes the
crossing number of the  quasi-alternating link $L$.

The following theorem makes use of the twisting construction introduced in \cite[Page.\,2452]{CK} which consists of
replacing a crossing by a rational tangle that extends it. This construction has been
generalized later to product of rational tangles in \cite[Def.\,2.5]{QCQ}.

\begin{thm}\label{new}
Let $L$ be  a quasi-alternating link at some crossing $c$ and let $L^{*}$ be the link  obtained from $L$ by replacing
the crossing $c$ by a product of rational tangles that extends $c$. If $V_L(t)$ has no gaps, then so is $V_{L^*}(t)$.
\end{thm}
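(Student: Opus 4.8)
The plan is to work entirely with the Kauffman bracket and to track coefficients after the substitution $u=A^{4}$. Since the bracket is linear in the tangle occupying the slot of $c$, inserting any tangle there gives $\langle L^{*}\rangle=f(A)\langle L_{0}\rangle+g(A)\langle L_{\infty}\rangle$ for Laurent polynomials $f,g$. For a single twist region of $n$ crossings of the type extending $c$ one computes explicitly $\langle L_{n}\rangle=A^{n}\langle L_{0}\rangle+c_{n}\langle L_{\infty}\rangle$, where $c_{n}=\sum_{k=0}^{n-1}(-1)^{k}A^{n-2-4k}$ solves the skein recursion and satisfies $c_{1}=A^{-1}$, so $\langle L\rangle=\langle L_{1}\rangle$. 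Writing $\langle L_{0}\rangle=A^{p}\tilde P(u)$ and $\langle L_{\infty}\rangle=A^{q}\tilde Q(u)$ with $q\equiv p+2\pmod 4$, and writing $[X]_{i}$ for the coefficient of $u^{i}$, this reads $\langle L_{n}\rangle=A^{n+p}R_{n}(u)$ with $R_{n}(u)=\tilde P(u)+\gamma_{n}(u)\tilde Q(u)$ and $\gamma_{n}(u)=\sum_{k=0}^{n-1}(-1)^{k}u^{-k}$; the hypothesis is that $R_{1}=\tilde P+\tilde Q$ has no gaps.

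The first key step extracts sign information from the quasi-alternating condition. Evaluating a bracket at $u=-1$ computes the determinant, and for a link with alternating Jones polynomial the determinant equals the sum of the absolute values of its coefficients. Hence the identity $\det(L)=\det(L_{0})+\det(L_{\infty})$ becomes $\sum_{i}\lvert[\tilde P]_{i}+[\tilde Q]_{i}\rvert=\sum_{i}(\lvert[\tilde P]_{i}\rvert+\lvert[\tilde Q]_{i}\rvert)$, so by the triangle inequality $[\tilde P]_{i}$ and $[\tilde Q]_{i}$ never carry opposite signs (no cancellation). Since $L_{0},L_{\infty}$ are themselves quasi-alternating, their brackets alternate, and after fixing a global sign we obtain $(-1)^{i}[\tilde P]_{i}\ge 0$ and $(-1)^{i}[\tilde Q]_{i}\ge 0$ for all $i$.

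The second step is a positivity observation. Because $[R_{n}]_{i}=[\tilde P]_{i}+\sum_{k=0}^{n-1}(-1)^{k}[\tilde Q]_{i+k}$, every summand of $(-1)^{i}[R_{n}]_{i}$ is non-negative; thus $R_{n}$ is again alternating, and $[R_{n}]_{i}=0$ exactly when $[\tilde P]_{i}=0$ together with $[\tilde Q]_{i}=\cdots=[\tilde Q]_{i+n-1}=0$. It remains to rule out such an $i$ inside the span of $R_{n}$. For $i$ lying in the span of $R_{1}$ the no-gap hypothesis forces $[\tilde P]_{i}\neq 0$ or $[\tilde Q]_{i}\neq 0$, and in either case a strictly positive summand survives; for $i$ in the lower tail opened up by the shifts in $\gamma_{n}$, the window $\{i,\dots,i+n-1\}$ still meets the support of $\tilde Q$, so $\tilde Q$ contributes. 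This settles the single twist (Champanerkar--Kofman) case.

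The main obstacle is the passage from a single twist region to an arbitrary product of rational tangles as in \cite{QCQ}. I would induct on the number of inserted crossings, adjoined one twist region at a time along the continued-fraction description: enlarging a fixed region leaves $L_{0},L_{\infty}$ unchanged, so the computation above applies verbatim, whereas moving to the next region interchanges the roles of the two smoothings. The delicate point is that the no-cancellation property must be re-established for each new region; here I would invoke the quasi-alternating-preservation results of \cite{CK} and \cite{QCQ} to guarantee that every intermediate link stays quasi-alternating, so that the determinant remains additive and the consistent-sign property persists at each stage. Keeping track of the twist directions, so that the smearing factor $\gamma_{n}$ always spreads $\tilde Q$ into the newly created degrees rather than against them, is the only genuinely fiddly part of the bookkeeping.
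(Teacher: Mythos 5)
Your proposal is correct and follows essentially the same route as the paper's proof: expand the bracket over a single twist region via the skein relation, use the additivity of the determinant at the quasi-alternating crossing to rule out cancellation between the $\langle L_0\rangle$- and $\langle L_\infty\rangle$-contributions, and then iterate one twist region at a time (invoking \cite{CK} and \cite{QCQ} to keep every intermediate link quasi-alternating) to reach an arbitrary product of rational tangles. Your explicit sign normalization $(-1)^i[\tilde P]_i\ge 0$, $(-1)^i[\tilde Q]_i\ge 0$ makes the ``no new gaps'' step a bit more transparent than the paper's case analysis via its Lemma on products of alternating polynomials, but the underlying argument is the same.
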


Before proving the above theorem, we state the following lemma which is easy to prove.
\begin{lemm}\label{monomial}
Suppose $h$ is a product of two alternating polynomials then the monomial $x^{n}$ in $h$
has nonzero coefficient if at least one $a_{i}b_{n-i} \neq 0$ where $a_{i}$ is the coefficient of $x^{i}$ and $b_{n-i}$
is the coefficient of $x^{n-i}$ in the first and second polynomials respectively.
\end{lemm}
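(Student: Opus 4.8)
The plan is to reduce everything to the sign of the individual terms in the convolution that computes the coefficients of the product. Writing the two factors as $f=\sum_i a_i x^i$ and $g=\sum_j b_j x^j$, and their product as $h=fg=\sum_n c_n x^n$, the coefficient of $x^n$ is $c_n=\sum_i a_i b_{n-i}$. Thus the lemma is the assertion that this sum is nonzero as soon as one of its summands $a_i b_{n-i}$ is nonzero, and the whole content lies in showing that the nonzero summands cannot cancel.

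First I would pin down the meaning of \emph{alternating} in the strong form the lemma actually needs: a nonzero coefficient of $x^i$ has sign $(-1)^i\varepsilon_f$ for a fixed global sign $\varepsilon_f\in\{+1,-1\}$ (and likewise $\varepsilon_g$ for $g$). This is the reading that holds whenever a polynomial has no gaps, and it is the one that makes the statement true; the bare inequality $a_i a_{i+1}\le 0$ is too weak, since, for instance, $1+x^2$ and $1-x^2$ both satisfy it yet their product $1-x^4$ has vanishing $x^2$-coefficient while $a_0 b_2\neq 0$. Granting the parity reading, for every index $i$ with $a_i b_{n-i}\neq 0$ one computes
\[
\sign(a_i b_{n-i})=\sign(a_i)\,\sign(b_{n-i})=(-1)^i\varepsilon_f\,(-1)^{n-i}\varepsilon_g=(-1)^n\varepsilon_f\varepsilon_g,
\]
which is independent of $i$.

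The key step is then immediate: all nonzero summands in $c_n=\sum_i a_i b_{n-i}$ carry the common sign $(-1)^n\varepsilon_f\varepsilon_g$, so the sum is a collection of terms of one fixed sign together with zeros and cannot telescope to $0$. Hence a single nonzero $a_i b_{n-i}$ forces $c_n\neq 0$, with $\sign(c_n)=(-1)^n\varepsilon_f\varepsilon_g$; as a by-product this shows $h$ is again alternating, which is exactly what the application to Theorem~\ref{new} will need. The only genuine obstacle is this sign bookkeeping — using the hypothesis in the precise form that excludes cancellation — since everything else is the routine convolution identity for the coefficients of a product, and I therefore expect the argument to be very short.
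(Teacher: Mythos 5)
The paper offers no proof of this lemma at all (it is merely asserted to be ``easy to prove''), so there is no argument to compare yours against line by line; your convolution-plus-sign argument is correct and is surely the intended one. More importantly, your remark about the hypothesis is substantive rather than pedantic: with the paper's own stated definition of \emph{alternating}, namely $a_ia_{i+1}\le 0$ with gaps permitted, the lemma as literally written is false --- your example $(1+x^2)(1-x^2)=1-x^4$ exhibits exactly the cancellation the lemma claims cannot happen, since the coefficient of $x^2$ vanishes while $a_0b_2\neq 0$. The statement only becomes true under the stronger parity reading you adopt, that every nonzero coefficient of $x^i$ has sign $(-1)^i\varepsilon$ for a fixed global sign $\varepsilon$, and that is precisely what forces every nonzero summand $a_ib_{n-i}$ of $c_n$ to carry the common sign $(-1)^n\varepsilon_f\varepsilon_g$ and hence prevents cancellation. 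Fortunately this is the form in which the lemma is actually invoked in the proof of Theorem \ref{new}: there one factor is the strictly alternating polynomial $\sum_{i}(-1)^iA^{n-4i-2}$ and the other is $\langle L_\infty\rangle$ for a quasi-alternating $L_\infty$, whose coefficients do satisfy the parity sign condition because the reduced Khovanov homology of a quasi-alternating link is thin. So your proof is correct, and your sharpening of the hypothesis repairs a real imprecision in the paper's statement rather than introducing an extraneous assumption.
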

Now, we shall prove Theorem 2.4.
\begin{proof}
Let $L$ be a quasi-alternating link at the crossing $c$.
We may assume that $\sign(c) > 0$ as shown in Figure \ref{figure} by taking
the mirror image if it is needed. For a positive integer $n$,
we let $L^{n}$ denote the link diagram with $n$  vertical or horizontal positive half-twists
crossings replacing the crossing $c$.

To prove our result, we first show that $\langle L^{n} \rangle$ satisfies
$a_{i}a_{i+1} \neq 0$ if $V_{L}(t)$ satisfies $a_{i}a_{i+1} \neq 0$ for $i=0,1,\ldots, m-1$.
It is a simple exercise to prove that:
\begin{align*}
\langle L^{n} \rangle & = A^{n} \langle L_{0} \rangle + \left(\sum_{i=0}^{n-1}(-1)^{i}
A^{n-4i-2}\right)\langle L_{\infty} \rangle ,\\
\langle L^{n} \rangle & = \left(\sum_{i=0}^{n-1}(-1)^{i}A^{n-4i-2}\right)\langle L_{0}
\rangle (A) + A^{n} \langle L_{\infty} \rangle,
\end{align*}
for the case of $n$ vertical and $n$ horizontal crossings respectively. Now, we shall  discuss
the first case. The second case can
be treated in a similar manner.
\begin{align*}
\langle L^{n} \rangle & = A^{n} \langle L_{0} \rangle + \left(\sum_{i=0}^{n-1}
(-1)^{i}A^{n-4i-2}\right)\langle L_{\infty} \rangle ,\\
& = A^{n-1}\left(A \langle L_{0} \rangle + A^{-1}\langle L_{\infty} \rangle \right)
+ \left(\sum_{i=1}^{n-1}(-1)^{i}A^{n-4i-2}\right)\langle L_{\infty} \rangle\\
& = A^{n-1} \langle L \rangle + \left(\sum_{i=1}^{n-1}(-1)^{i}A^{n-4i-2}\right)
\langle L_{\infty} \rangle
\end{align*}

Note that no monomial of the first term will cancel out with a monomial of the second term
because $\det(L^{n}) = \det(L) + (n-1) \det(L_{\infty})$ which can be proved
since $L^{n}$ is quasi-alternating as a result of \cite[Theorem\,2.1]{CK}.

Moreover, if $A^{t}$ is a monomial of nonzero coefficient in $\langle L_{\infty} \rangle$, then $A^{n+t-6}$
is a monomial with nonzero coefficient in the second term and at the same time
the monomial $A^{n+t-2}$ has nonzero coefficient in the first term. Note that these two monomials
will not cancel out in $\langle L^{n} \rangle$ because of the above note. This proves
that if $a_{i}a_{i+1} = 0$ then either this happens in first term or in the second term.

The first case is impossible because of the assumption on the link $L$.
For the second one, suppose the monomial of nonzero coefficient with the
lowest degree in the second term such that
$a_{i}a_{i+1} = 0$ is $A^{4t+s}$ for some $s$ and $t$. In other words, the monomial $A^{4t+s}$ has nonzero
coefficient and $A^{4t+s+4}$ has zero coefficient in the second term and $t$ is the smallest such
number. We show that the coefficient of the monomial $A^{4t+s+4}$ has nonzero coefficient in the first
term. As a result of Lemma \ref{monomial}, having a nonzero coefficient of the monomial $A^{4t+s}$ implies that at least one
of the following monomials $A^{4t+s-n+6}, A^{4t+s-n+10}, \ldots, A^{4t+s+3n-2}$ has to have nonzero coefficient
in $\langle L_{\infty} \rangle$ and at the same time having a zero coefficient of the monomial $A^{4t+s+4}$
implies that all of the following monomials have to have zero coefficient
$A^{4t+s-n+10}, A^{4t+s-n+14}, \ldots, A^{4t+s+3n+2}$
in $\langle L_{\infty} \rangle$. It is clear in this case that the monomial $A^{4t+s-n+6}$ has nonzero coefficient
and all of the other monomials have to have zero coefficient in $\langle L_{\infty} \rangle$. This will imply
that the monomial $A^{4t+s+4}$ has nonzero coefficient in the first term.

Now this proves that $\langle L^{n} \rangle$ satisfies $a_{i}a_{i+1}\neq 0$ for $i= 0, 1,\ldots, m-1$.
The fact that the link $L^{n}$ is quasi-alternating implies that $\langle L^{n} \rangle$ is alternating.
Therefore, $\langle L^{n} \rangle$ satisfies $a_{i}a_{i+1}< 0$ for $i= 0, 1,\ldots, m-1$. Finally the result
follows since any product of rational tangles can be obtained by a sequence of integer tangles.

\end{proof}

\begin{coro}
If a link is  quasi-alternating  with a gap in its  Jones polynomial, then it cannot be obtained by twisting a quasi-alternating link with no gaps in its Jones polynomial.
\end{coro}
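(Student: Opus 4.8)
The plan is to observe that this statement is precisely the contrapositive of Theorem \ref{new}, so the argument reduces to a direct logical contraposition once the terminology is matched up. First I would fix notation: let $L^{*}$ denote the given quasi-alternating link, which is assumed to have a gap in its Jones polynomial, and suppose toward a contradiction that $L^{*}$ is obtained from some quasi-alternating link $L$ by twisting at a crossing $c$ at which $L$ is quasi-alternating. Here ``twisting'' is to be read as exactly the operation appearing in Theorem \ref{new}, namely replacing $c$ by a product of rational tangles that extends it. Suppose moreover, as in the hypothesis we are contradicting, that $V_L(t)$ has no gaps.

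Next I would simply invoke Theorem \ref{new}. Its hypotheses are met verbatim: $L$ is quasi-alternating at $c$, the link $L^{*}$ arises from $L$ by the prescribed tangle replacement, and $V_L(t)$ has no gaps. The conclusion of the theorem then forces $V_{L^{*}}(t)$ to have no gaps as well. This directly contradicts the standing assumption that $L^{*}$ has a gap in its Jones polynomial, and the contradiction completes the argument.

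The only point requiring care, and it is the main (if modest) obstacle, is to ensure that the informal word ``twisting'' in the statement is understood as the specific construction underlying Theorem \ref{new}: the replacement of a quasi-alternating crossing by a product of rational tangles that extends it, as introduced in \cite{CK} and generalized in \cite{QCQ}. Once that identification is in place, no further computation or structural work is needed—the corollary is a formal restatement of the theorem's contrapositive, and the proof can be recorded in a single sentence referring back to Theorem \ref{new}.
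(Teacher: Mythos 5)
Your proposal is correct and matches the paper's treatment: the corollary is stated there with no separate argument precisely because it is the contrapositive of Theorem \ref{new}, with ``twisting'' meaning replacement of the quasi-alternating crossing by a product of rational tangles that extends it. Nothing further is needed.
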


\begin{rem}
With the aid of \cite{CL} and the tables of quasi-alternating knots of at most 12 crossings in \cite{J},
we checked by hand that all knots with   12 crossings or less with gap
in the Jones polynomial  are either  $(2,n)$-torus knots or not quasi-alternating. This confirms the above
conjecture for all knots of at most 12 crossings. As a consequence of the above theorem, we conclude that a counter example
to the above conjecture has to be a knot of at least 13 crossings that can not be obtained by twisting a quasi-alternating
knot of at most 12 crossings.
\end{rem}
\section{The Jones polynomial  of Quasi-alternating Montesinos Links}

In this section, we prove Conjecture \ref{main} for all quasi-alternating Montesinos links.
Let $\alpha$ and $\beta$ be coprime integers with $\alpha > \beta \geq 1$ and $[a_{1},a_{2},\ldots,a_{n}]$ be the continued
fraction of the rational number $\frac{\beta}{\alpha}$ of positive integers. In \cite{KaLo}, Kauffman and Lopez
defined a sequence of positive integers recursively by
\[
T(0) = 1, T(1) = a_{1}, T(m) = a_{m}T(m-1) + T(m-2),
\]
and showed that $T(n)$ is equal to the determinant of the rational
link $L_{\alpha,\beta}$ of slope $\frac{\beta}{\alpha}$.

\begin{lemm}
For any positive rational number $\frac{\beta}{\alpha}$, there is a
continued fraction $[a_{1},a_{2},\ldots,a_{n}]$ of positive
integers.
\end{lemm}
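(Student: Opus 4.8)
The plan is to prove this constructively with the Euclidean algorithm. Since the setup takes $\alpha>\beta\ge 1$, the number $\frac{\beta}{\alpha}$ lies in $(0,1)$, and in the convention used here it is written in the reciprocal form whose denominator is the continuant $T(n)$ defined by the recursion above; hence it is enough to expand $\frac{\alpha}{\beta}>1$ into positive integer partial quotients.

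First I would divide with remainder, writing $\alpha=a_1\beta+r_1$ with $0\le r_1<\beta$. Because $\alpha>\beta$ we have $a_1=\lfloor\alpha/\beta\rfloor\ge 1$ and $\frac{\alpha}{\beta}=a_1+\frac{r_1}{\beta}$. If $r_1=0$ the expansion is the single term $a_1$; otherwise $\frac{r_1}{\beta}$ is a positive rational in $(0,1)$, and I would recurse by expanding its reciprocal $\frac{\beta}{r_1}$ on the pair $(\beta,r_1)$. Iterating produces the sequence of quotients $a_1,a_2,\dots$ coming from the Euclidean algorithm applied to $(\alpha,\beta)$.

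The one point that actually needs verifying is that every quotient is a genuine positive integer. At the $i$-th step one divides $r_{i-2}$ by $r_{i-1}$, and the strict inequality $r_{i-1}<r_{i-2}$ is inherited from the remainder bound of the previous step, so $a_i=\lfloor r_{i-2}/r_{i-1}\rfloor\ge 1$. Termination is automatic: the remainders $\beta>r_1>r_2>\cdots\ge 0$ strictly decrease through nonnegative integers, so some $r_k=0$ after finitely many steps, and the accumulated $a_1,\dots,a_n$ are the desired positive integers.

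I do not expect a real obstacle, as this is the classical continued fraction expansion of a rational number; the only care required is bookkeeping. Specifically, one must fix the convention so that a number in $(0,1)$ has a positive leading term, which is handled by passing to the reciprocal $\frac{\alpha}{\beta}$, and one must confirm that the process stops, which follows from the strict descent of the remainders, i.e. from rationality. Note that coprimality of $\alpha$ and $\beta$ plays no role in the existence statement; it only forces the final remainder to equal $\gcd(\alpha,\beta)=1$.
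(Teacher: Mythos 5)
Your proof is correct: the Euclidean algorithm applied to the pair $(\alpha,\beta)$ produces the positive integer partial quotients, every quotient is at least $1$ because each divisor is strictly smaller than the corresponding dividend, and the strict descent of the remainders forces termination. The paper actually states this lemma without any proof, treating it as the classical fact about continued fraction expansions of rationals, so your argument simply supplies the standard justification that was omitted; the only bookkeeping point worth flagging is that in the paper's convention the expansion satisfies $\frac{\alpha}{\beta}=a_{n}+\cfrac{1}{a_{n-1}+\cfrac{1}{\ddots+\cfrac{1}{a_{1}}}}$, so the quotients produced by your algorithm appear in reverse order as $a_{n},a_{n-1},\ldots,a_{1}$, which is a harmless relabeling for the existence claim.
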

\begin{lemm}
Let $[a_{1},a_{2},\ldots,a_{n}]$ be a continued fraction of
$\frac{\beta}{\alpha}$ of positive integers, then $T(n - 1) =
\beta$ and $T(n) = \alpha$.
\end{lemm}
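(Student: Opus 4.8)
The plan is to argue by induction on the length $n$ of the continued fraction, exploiting the fact that the recurrence defining $T$ peels off the \emph{last} partial quotient, $T(n)=a_nT(n-1)+T(n-2)$. Since the two identities $T(n-1)=\beta$ and $T(n)=\alpha$ are coupled through this recurrence (which needs both $T(n-1)$ and $T(n-2)$), I would prove them simultaneously and carry the pair through the induction. Throughout I use the convention under which the outermost quotient is $a_n$, so that $\frac{\beta}{\alpha}=\cfrac{1}{a_n+\cfrac{1}{a_{n-1}+\cdots+\frac{1}{a_1}}}$; this is precisely the convention that makes peeling $a_n$ the outermost operation and under which the statement is literally true.

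For the base case $n=1$, the continued fraction $[a_1]$ represents $\frac{1}{a_1}$, so $\beta=1$ and $\alpha=a_1$, matching $T(0)=1$ and $T(1)=a_1$ exactly.

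For the inductive step I would truncate to the continued fraction $[a_1,\ldots,a_{n-1}]$ and write it in lowest terms as $\frac{\beta'}{\alpha'}$. The crucial observation is that $T(k)$ depends only on $a_1,\ldots,a_k$, so the $T$-sequence attached to $a_1,\ldots,a_{n-1}$ agrees with the one attached to $a_1,\ldots,a_n$ up to index $n-1$; hence the induction hypothesis applied to the length $n-1$ fraction yields directly $T(n-1)=\alpha'$ and $T(n-2)=\beta'$. Expanding the continued fraction from the outside and peeling the quotient $a_n$ gives $\frac{\alpha}{\beta}=a_n+\frac{\beta'}{\alpha'}=\frac{a_n\alpha'+\beta'}{\alpha'}$. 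Comparing reduced fractions then yields $\beta=\alpha'=T(n-1)$ and $\alpha=a_n\alpha'+\beta'=a_nT(n-1)+T(n-2)=T(n)$, which is exactly the recurrence defining $T(n)$, closing the induction.

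The step that requires care, and which I regard as the main point, is the reduction step: to pass from the equality of the two fractions to the separate identities $\beta=\alpha'$ and $\alpha=a_n\alpha'+\beta'$, I must know that $\frac{a_n\alpha'+\beta'}{\alpha'}$ is already in lowest terms. This follows since $\gcd(\alpha',\beta')=1$ for a convergent, whence $\gcd(a_n\alpha'+\beta',\alpha')=\gcd(\beta',\alpha')=1$; together with $\gcd(\beta,\alpha)=1$ this forces the numerators and denominators to coincide. The only other delicate point is fixing the continued-fraction convention so that the truncated $T$-values line up without any re-indexing, which is exactly why I peel the last quotient rather than the first. With the coprimality in hand, both identities drop out together and the induction is complete.
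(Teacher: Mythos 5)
Your proof is correct and is essentially the paper's argument recast as a formal induction: the paper unfolds the recurrence via $\frac{T(n)}{T(n-1)} = a_n + \frac{T(n-2)}{T(n-1)}$ and telescopes this into the continued fraction for $\frac{\alpha}{\beta}$, then concludes from the coprimality of $T(n)$ and $T(n-1)$ (proved by descending through the recurrence), which is exactly the content of your inductive step under the same ``outermost quotient is $a_n$'' convention. The only cosmetic difference is where the coprimality of the two sides is established --- you carry it through the induction via $\gcd(a_n\alpha'+\beta',\alpha')=\gcd(\beta',\alpha')=1$, whereas the paper proves $\gcd(T(n),T(n-1))=1$ once, up front.
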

\begin{proof}
Using the recursive formula for $T(n)$, we can see that if  $d|T(n)$ and
$d|T(n-1)$, then $d|T(n-2)$. If this process is repeated, we obtain
that $d|T(0)=1$. This shows that $T(n)$ and $T(n-1)$ are relatively prime.

Now, we have

\begin{align*}
\frac{T(n)}{T(n-1)} & = \frac{a_{n}T(n-1) + T(n-2)}{T(n-1)} \\
& = a_{n} + \frac{T(n-2)}{T(n-1)} \\
& = a_{n} + \frac{1}{\frac{T(n-1)}{T(n-2)}}\\
& = a_{n} + \frac{1}{a_{n-1} + {\frac{T(n-3)}{T(n-2)}}}\\
&  \ \ \vdots  \\
& = a_{n} + \cfrac{1}{a_{n-1}+\cfrac{1}{\ddots + \cfrac{1}{a_{1} }}}
= \frac{\alpha}{\beta}.
\end{align*}

\end{proof}

\begin{lemm}
Let $[a_{1},a_{2},\ldots,a_{n}]$ be a continued fraction of
$\frac{\beta}{\alpha}$ of positive integers, then $\alpha \geq a_{1}+a_{2}+\ldots+a_{n}$.
\end{lemm}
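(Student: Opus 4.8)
The plan is to prove the equivalent statement $T(n) \geq a_1 + a_2 + \cdots + a_n$, since the preceding lemma already identifies $T(n) = \alpha$. Writing $s_m := a_1 + a_2 + \cdots + a_m$ (with the convention $s_0 = 0$), I would establish by induction on $m$ that $T(m) \geq s_m$ for every $m \geq 1$, and then specialize to $m = n$.

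For the base of the induction I would verify $m = 1$ and $m = 2$ directly from the recursive definition of $T$. Indeed $T(1) = a_1 = s_1$ gives equality, while $T(2) = a_2 T(1) + T(0) = a_1 a_2 + 1$ and $s_2 = a_1 + a_2$, so that
\[
T(2) - s_2 = a_1 a_2 + 1 - a_1 - a_2 = (a_1 - 1)(a_2 - 1) \geq 0,
\]
using $a_i \geq 1$ for all $i$. For the inductive step, taking $m \geq 3$, I would feed the recursion $T(m) = a_m T(m-1) + T(m-2)$ into the two induction hypotheses $T(m-1) \geq s_{m-1}$ and $T(m-2) \geq s_{m-2}$, and use $s_m = s_{m-1} + a_m$, to obtain
\begin{align*}
T(m) - s_m &\geq a_m s_{m-1} + s_{m-2} - s_{m-1} - a_m \\
&= (a_m - 1)(s_{m-1} - 1) + (s_{m-2} - 1).
\end{align*}
Since $m \geq 3$ forces $s_{m-1} \geq a_1 + a_2 \geq 2$ and $s_{m-2} \geq a_1 \geq 1$, while $a_m \geq 1$, all the factors on the right are nonnegative, so the right-hand side is $\geq 0$ and the step is complete.

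The one point that genuinely needs care — and the only real obstacle — is that this generic estimate fails at $m = 2$: there the term $s_{m-2} = s_0 = 0$ makes $(s_{m-2} - 1)$ negative, so the inequality above does not close. The reason is that the bound $T(0) \geq s_0$ is strict ($T(0) = 1 > 0 = s_0$), and replacing $T(0)$ by $s_0$ discards exactly the $+1$ that one needs. Treating $m = 2$ separately with the exact value $T(0) = 1$, as above, repairs this gap, and no further difficulty arises. Combining the base cases with the inductive step yields $T(m) \geq s_m$ for all $m \geq 1$, and in particular $\alpha = T(n) \geq a_1 + a_2 + \cdots + a_n$, as desired.
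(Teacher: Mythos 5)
Your proof is correct and follows essentially the same route as the paper's: an induction on the length of the continued fraction driven by the recursion $T(m)=a_mT(m-1)+T(m-2)$ and the inequality $(x-1)(y-1)\geq 0$. The only difference is cosmetic: the paper bounds $T(n-2)\geq 1$ directly and invokes the induction hypothesis only for $T(n-1)$, which lets it avoid the separate $m=2$ base case that your two-term induction requires.
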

\begin{proof}
We use induction on the length of the continued fraction. It is clear that this holds for $n=1$. Now
suppose this holds for any continued fraction of length $n-1$. Therefore, we have $T(n-1) \geq
a_{1}+a_{2}+\ldots +a_{n-1}$. Now $\alpha = a_{n}T(n-1)+T(n-2) \geq a_{n}T(n-1) + 1 \geq a_{n}+T(n-1) =
a_{n}+a_{n-1}+\ldots+a_{1}$. The last inequality follows if we apply the result for $n=2$.
\end{proof}

\begin{defn}
A Montesinos link is a link that has a diagram as shown in Figure
\ref{figure1}(a). In this diagram, $e$ is an integer that represents
the number of half twists and the box \fbox{$\alpha_{i}, \beta_{i}$}
stands for a rational tangle of slope
$\frac{\beta_{i}}{\alpha_{i}}$, where  $\alpha_{i} > 1$ and
$\beta_{i}$ are coprime integers.
\end{defn}

\begin{figure}[htbp]
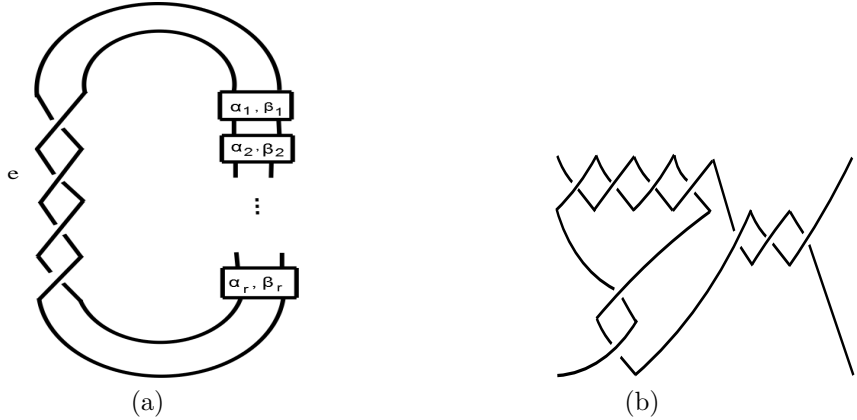

    \begin{center}
\includegraphics[width=6cm, height=5cm]{Montesinosform1} \hspace{2cm}
  \includegraphics[width=4cm, height =3cm]{tangle}\\
(a)\hspace{6cm} (b)
\end{center}
\caption{Montesinos link diagram in (a) with $e = 4$. (b) represents
the rational tangle of slope $\frac{-7}{31}$ with associated
continued fraction $[-3,-2,-4]$.}\label{figure1}
\end{figure}

It is worth mentioning here that our conventions for the signs in the picture above are the same as in \cite{I} and \cite{QCQ}. Recall that    Montesinos links are classified by the following theorem.
\begin{thm}\cite{Bo,BoSi}
The Montesinos link $M(e;(\alpha_{1},\beta_{1}),
(\alpha_{2},\beta_{2}), \dots, (\alpha_{r},\beta_{r}))$ with $r \geq
3$ and $\sum_{j = 1}^{r} \frac{1}{\alpha_{j}} \leq r -2$, is
classified by the ordered set of fractions
$(\frac{\beta_{1}}{\alpha_{1}}\pmod
1,\frac{\beta_{2}}{\alpha_{2}}\pmod 1, \ldots,
\frac{\beta_{r}}{\alpha_{r}}\pmod 1)$ up to cyclic permutations and
reversal of order, together with the rational number $e_{0} = e -
\sum_{j = 1}^{r} \frac{\beta_{j}}{\alpha_{j}}$.
\end{thm}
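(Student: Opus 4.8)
The plan is to prove the classification by passing to the two-fold branched cover and invoking the classification of Seifert fibered spaces. Recall that the double cover of $S^3$ branched along the Montesinos link $M(e;(\alpha_1,\beta_1),\ldots,(\alpha_r,\beta_r))$ is a Seifert fibered space $\Sigma$ over the base orbifold $S^2$ with $r$ cone points, the $i$-th exceptional fiber carrying the (unnormalized) Seifert pair $(\alpha_i,\beta_i)$ and the total space carrying Euler number $e-\sum_j \beta_j/\alpha_j$. The branching involution $\tau$ on $\Sigma$ is fiber-respecting, and the quotient map $(\Sigma,\tau)\to (S^3,L)$ recovers the link. So the first step is to set up this dictionary carefully, checking that the integer $e$ and the tangle slopes $\beta_i/\alpha_i$ translate into exactly these Seifert data and that the integral part of each $\beta_i$ is absorbed into $e$ under the standard normalization of Seifert invariants (this is the source of the ``$\pmod 1$'' in the statement).

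Second, I would reduce the classification of the links to the equivariant classification of the pairs $(\Sigma,\tau)$. This is where the hypotheses enter. The base orbifold is $S^2(\alpha_1,\ldots,\alpha_r)$ with orbifold Euler characteristic $2-r+\sum_j 1/\alpha_j$, so the condition $r\geq 3$ together with $\sum_j 1/\alpha_j \leq r-2$ forces this quantity to be non-positive. Hence $\Sigma$ is a ``large'' Seifert fibered space rather than a lens space or one of the finitely many small exceptional spaces. For such spaces the Seifert fibration is unique up to isotopy, and $\tau$ may be isotoped to a fiber-preserving involution; consequently two Montesinos links in this range are equivalent if and only if their branched covers admit an orientation-preserving homeomorphism that respects the fibrations and commutes with the involutions. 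I would make this precise by quoting the standard uniqueness-of-fibration results.

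Third, I would apply the classification of Seifert fibered spaces: such a fibration-respecting homeomorphism induces a homeomorphism of the base $S^2$ permuting the cone points and must preserve the Euler number $e_0=e-\sum_j \beta_j/\alpha_j$. Since a self-homeomorphism of $S^2$ permuting $r$ marked points, once the cyclic order inherited from the planar diagram is fixed, can realize only cyclic permutations (in the orientation-preserving case) and their reversals (in the orientation-reversing case), the Seifert data are pinned down precisely by the ordered tuple $(\beta_1/\alpha_1 \bmod 1,\ldots,\beta_r/\alpha_r \bmod 1)$ up to cyclic permutation and reversal, together with $e_0$. Assembling the three steps yields the stated invariants.

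The main obstacle will be the second step: rigorously controlling the uniqueness of the Seifert fibration and the fiber-preserving nature of the covering involution, since this is exactly where the small exceptional cases (lens spaces, prism manifolds, and the handful of Seifert spaces admitting inequivalent fibrations) must be excluded, and the arithmetic hypothesis $\sum_j 1/\alpha_j \leq r-2$ is designed precisely to do this; the borderline Euclidean case $\sum_j 1/\alpha_j = r-2$ needs slightly more care. Getting the normalization bookkeeping right so that the residual ambiguity collapses to \emph{exactly} ``cyclic permutation and reversal, plus the Euler number $e_0$'', with no hidden extra symmetry, is the delicate part; everything else is a translation between the link picture and standard three-manifold invariants. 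This is, of course, the content of the cited work of Bonahon and Bonahon--Siebenmann.
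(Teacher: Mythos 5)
The paper does not prove this statement: it is quoted from Bonahon and Boileau--Siebenmann, so there is no internal argument to compare yours against. Your outline does follow the strategy of those references --- pass to the double branched cover, identify it as a Seifert fibered space over the orbifold $S^2(\alpha_1,\ldots,\alpha_r)$ with Euler number $e_0$, use the hypotheses $r\ge 3$ and $\sum_{j}1/\alpha_j\le r-2$ to force non-positive orbifold Euler characteristic and hence uniqueness of the fibration, and then read off the invariants. As a plan this is the right one.

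Two steps are genuinely incomplete rather than merely deferred, however. First, in your second step the asserted equivalence ``two Montesinos links in this range are equivalent if and only if their branched covers admit a fibration- and involution-respecting homeomorphism'' hides the hardest content: one must show that an arbitrary homeomorphism between the covers can be isotoped to an equivariant one, equivalently that the covering involution is unique up to conjugation and isotopy. This does not follow from uniqueness of the Seifert fibration alone (a Seifert space can carry several fiber-preserving involutions with quotient $S^3$), and it is precisely the substance of the equivariant/characteristic-splitting machinery of the cited works. Second, your third step misstates why only cyclic permutations and reversals occur: a homeomorphism of $S^2$ can realize \emph{any} permutation of $r$ marked points, and the classification of Seifert fibered spaces by itself would only recover the \emph{unordered} multiset of fractions together with $e_0$. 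The cyclic order survives because the induced map on the base must commute with the reflection of $S^2$ coming from the branching involution, whose fixed circle carries the cone points in their diagrammatic cyclic order; your phrase ``once the cyclic order inherited from the planar diagram is fixed'' assumes exactly what needs to be proved. With those two points supplied, your sketch becomes the standard proof.
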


\begin{defn}
A Montesinos link $M(e;(\alpha_{1},\beta_{1}),
(\alpha_{2},\beta_{2}), \dots, (\alpha_{r},\beta_{r}))$ is in
standard form if $0 < \frac{\beta_{i}}{\alpha_{i}} < 1$ for $ i =
1,2, \ldots, r$.
\end{defn}
\begin{rem}
According to the above theorem, every Montesinos link has a unique
standard form.
\end{rem}

\begin{lemm}\label{simple1}
The Montesinos link $L=M(e;(\alpha_{1},\beta_{1}),(\alpha_{2},\beta_{2}),\ldots,(\alpha_{r},\beta_{r}))$ in standard form
with $e < 0$ is not a $(2,n)$-torus link for any integer $n$.
\end{lemm}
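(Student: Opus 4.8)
The plan is to distinguish $L$ from every $(2,n)$-torus link by a single quantity read off from the standard form, namely the reduced Euler number
$e_{0}=e-\sum_{j=1}^{r}\frac{\beta_{j}}{\alpha_{j}}$, and to interpret the dichotomy topologically via the double branched cover. Recall that a $(2,n)$-torus link is a two-bridge link, so its double branched cover is a lens space and in particular has cyclic fundamental group; equivalently its standard Montesinos form is $M(0;(|n|,1))$ (up to mirror image), with a single rational tangle and $e=0$, so for it one has $|e_{0}|=\tfrac{1}{|n|}<1$. On the other hand, the double branched cover of $L$ is the Seifert fibred space over $S^{2}$ with $r$ exceptional fibres of orders $\alpha_{1},\dots,\alpha_{r}\ge 2$ and orbifold Euler number (up to sign) $e_{0}$.

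First I would compute $e_{0}$ for $L$. Since $L$ is in standard form, $0<\frac{\beta_{j}}{\alpha_{j}}<1$ for every $j$, hence $0<\sum_{j=1}^{r}\frac{\beta_{j}}{\alpha_{j}}<r$; combined with $e\le -1$ this gives $e_{0}=e-\sum_{j=1}^{r}\frac{\beta_{j}}{\alpha_{j}}<-1$, so in particular $|e_{0}|>1$ and $e_{0}\neq 0$. This is exactly where the hypothesis $e<0$ enters, and it already separates $L$ from any $(2,n)$-torus link at the level of $|e_{0}|$. To turn this into a genuine proof I would show that the double branched cover of $L$ is not a lens space: since $L$ is a Montesinos link we have $r\ge 3$, so the base orbifold $S^{2}(\alpha_{1},\dots,\alpha_{r})$ carries at least three cone points of order at least $2$ and therefore has non-cyclic orbifold fundamental group; as $e_{0}\neq 0$ the fibration is non-degenerate and $\pi_{1}$ of the double cover is a central extension of that non-cyclic group, hence non-cyclic. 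A lens space has cyclic fundamental group, so the double branched cover of $L$ is not a lens space, and consequently $L$ is neither two-bridge nor a $(2,n)$-torus link.

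The step I expect to be the main obstacle is making the non-lens-space conclusion airtight across the finitely many spherical base orbifolds $S^{2}(2,2,k)$, $S^{2}(2,3,3)$, $S^{2}(2,3,4)$ and $S^{2}(2,3,5)$, where the double cover is a finite space form rather than an aspherical manifold; there one must still verify that the relevant space form has non-cyclic (dihedral or binary polyhedral) fundamental group, and this is precisely where $e<0$, through $e_{0}\neq 0$, rules out the degenerate $e_{0}=0$ situation in which the cover could split off a reducible summand and the Seifert data collapse. A more elementary route that avoids three-manifold topology is to invoke the classification theorem quoted above together with the uniqueness of the standard form: a $(2,n)$-torus link is a two-bridge link and hence has a standard form with $e=0$ and at most one rational tangle, so it cannot agree with the standard form of $L$, which has $e<0$ and $r\ge 3$ tangles. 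Carrying this version out rigorously requires only determining the standard form of the $(2,n)$-torus links and checking it against the sign conventions of \cite{I} and \cite{QCQ}.
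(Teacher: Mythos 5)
Your main argument (via the double branched cover) is correct in outline, but it takes a genuinely different route from the paper. The paper's proof is entirely elementary and diagrammatic: since the standard form with $e<0$ gives a reduced alternating diagram, that diagram realizes the crossing number, which for a $(2,n)$-torus link equals $n=\det(L)$; combining the determinant formula $\det(L)=\left(\prod_{j}\alpha_{j}\right)\left(-e+\sum_{j}\beta_{j}/\alpha_{j}\right)$ with the continued-fraction estimate $\alpha\geq a_{1}+\cdots+a_{s}$ (the preceding lemma) produces the chain $n=\det(L)>\prod_{j}\alpha_{j}>\sum_{j}\alpha_{j}>\sum_{j}\sum_{i}a_{i}^{j}=n$, a contradiction. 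Your route --- the double branched cover is Seifert fibred over $S^{2}(\alpha_{1},\dots,\alpha_{r})$ with $r\geq 3$ cone points of order at least $2$, its $\pi_{1}$ surjects onto the non-cyclic orbifold group, hence it is not a lens space and $L$ is not two-bridge --- proves strictly more (it excludes all two-bridge links and never actually uses $e<0$), but at the cost of importing Montesinos' theorem and orbifold fundamental groups, machinery absent from the paper. Note that the surjection $\pi_{1}(M)\twoheadrightarrow\pi_{1}^{orb}(S^{2}(\alpha_{1},\dots,\alpha_{r}))$ holds regardless of the Euler number, so your worries about $e_{0}\neq 0$ and about the spherical bases $S^{2}(2,2,k)$, $S^{2}(2,3,3)$, etc.\ are unnecessary: dihedral and binary polyhedral quotients are non-cyclic, full stop.

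Two caveats. First, your fallback ``elementary route'' does not work as stated: the classification theorem the paper quotes from \cite{Bo,BoSi} only compares Montesinos presentations with $r\geq 3$ and $\sum_{j}1/\alpha_{j}\leq r-2$ (a hypothesis that fails, e.g., for $M(-1;(2,1),(2,1),(2,1))$), and in any case it does not assert that a link admitting such a presentation cannot also be presented with a single rational tangle; so ``uniqueness of the standard form'' cannot by itself rule out the two-bridge case, and comparing the quantity $e_{0}$ across presentations with different $r$ is likewise unjustified. Second, both your proof and the paper's tacitly assume $r\geq 3$: the lemma as literally stated is false for $r=1$, since $M(-1;(2,1))$ is in standard form with $e<0$ and is the trefoil, a $(2,3)$-torus knot. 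Your claim that $r\geq 3$ is automatic for ``a Montesinos link'' is not supported by the paper's definition (its Lemma on the $r=2$ case shows the paper allows smaller $r$), although it does match the context in which this lemma is applied.
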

\begin{proof}
Suppose that the link $L$ is a $(2,n)$-torus link for some integer $n$. It is clear that the corresponding Montesinos diagram of $L$
is reduced alternating diagram, so this diagram has minimal number of crossings among all other diagrams which has to be
equal to $n$ from the assumption. Therefore, we have $n = \sum_{j=1}^{r}\sum_{i=1}^{s_{j}} a_{i}^{j} $ with
$[a_{1}^{j},a_{2}^{j},\ldots,a_{s_{j}}^{j}]$ being the continued fraction of $\frac{\alpha_{j}}{\beta_{j}}$. Using the fact that $\det(L) = \left(\prod_{j=1}^{r} \alpha_{j}\right) \left(-e +
\sum_{j=1}^{r} \frac{\beta_{j}}{\alpha_{j}}\right)$, we get:
\[
n = \det(L) = \left(\prod_{j=1}^{r} \alpha_{j}\right) \left(-e +
\sum_{j=1}^{r} \frac{\beta_{j}}{\alpha_{j}}\right) > \prod_{j=1}^{r} \alpha_{j} > \sum_{j=1}^{r} \alpha_{j} > \sum_{j=1}^{r}
\sum_{i=1}^{s_{j}} a_{i}^{j}  = n,
\]
which is impossible. Thus, $L$ cannot be a  $(2,n)$-torus link.
\end{proof}

\begin{lemm}\label{simple2}
The Montesinos link $L=M(0;(\alpha_{1},\beta_{1}),(\alpha_{2},\beta_{2}))$ with $\frac{\alpha_{2}}{\alpha_{2} - \beta_{2}} >
\frac{\alpha_{1}}{\beta_{1}}$ is not a $(2,n)$-torus link for any integer $n$.
\end{lemm}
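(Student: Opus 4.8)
The plan is to run the same determinant-versus-crossing-number comparison used in the proof of Lemma \ref{simple1}, but adapted to the degenerate case $r=2$, $e=0$, where the factor $-e$ no longer supplies the needed slack. I would argue by contradiction: suppose $L$ is the $(2,n)$-torus link. For $n\ge 2$ this link is reduced alternating with crossing number equal to its determinant, so the assumption forces $\det(L)=c(L)=n$. The strategy is to show that under the stated hypothesis one always has $\det(L)>c(L)$, which is the desired contradiction.

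First I would compute the determinant from the formula used in the proof of Lemma \ref{simple1}: with $e=0$ and $r=2$ it gives $\det(L)=\alpha_1\alpha_2\big(\frac{\beta_1}{\alpha_1}+\frac{\beta_2}{\alpha_2}\big)=\alpha_2\beta_1+\alpha_1\beta_2$. Since the link is in standard form we have $\beta_1,\beta_2\ge 1$, whence $\det(L)\ge\alpha_1+\alpha_2$, with equality precisely when $\beta_1=\beta_2=1$. Next I would bound the crossing number from above by exhibiting a diagram: writing $[a_1^j,\dots,a_{s_j}^j]$ for the continued fraction of $\frac{\alpha_j}{\beta_j}$, the Montesinos diagram of $L$ has $\sum_{j}\sum_{i}a_i^j$ crossings, and the inequality $\alpha_j\ge a_1^j+\cdots+a_{s_j}^j$ proved above yields $c(L)\le\alpha_1+\alpha_2$. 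Combining these two estimates with the torus assumption gives $\alpha_1+\alpha_2\le\det(L)=c(L)\le\alpha_1+\alpha_2$, so all the inequalities are equalities; in particular $\det(L)=\alpha_1+\alpha_2$, which forces $\beta_1=\beta_2=1$.

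The last step, where I expect the hypothesis to do the real work, is to rule out $\beta_1=\beta_2=1$. Substituting these values, the assumption $\frac{\alpha_2}{\alpha_2-\beta_2}>\frac{\alpha_1}{\beta_1}$ becomes $\frac{\alpha_2}{\alpha_2-1}>\alpha_1$; but $\frac{\alpha_2}{\alpha_2-1}\le 2\le\alpha_1$ because $\alpha_1,\alpha_2\ge 2$, a contradiction. Hence $\beta_1=\beta_2=1$ cannot occur, the squeeze above is impossible, and $L$ is not a $(2,n)$-torus link. The main obstacle is conceptual rather than computational: one must recognize that the genuinely torus configuration among these links is exactly the two–integer–tangle case $\beta_1=\beta_2=1$, which indeed yields the $(2,\alpha_1+\alpha_2)$-torus link, and that the stated hypothesis is precisely strong enough to exclude it. The only technical care needed is to justify that the Montesinos diagram legitimately bounds $c(L)$ from above and that $\det(L)=c(L)$ for $(2,n)$-torus links, both of which are standard; note also that $\det(L)\ge\alpha_1+\alpha_2\ge 4$, so the torus link in question is nontrivial and the equality $\det=c$ applies.
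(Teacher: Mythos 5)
Your argument is correct and is essentially the paper's own proof: both compare $\det(L)=\alpha_1\beta_2+\alpha_2\beta_1$ with the crossing number of the reduced alternating Montesinos diagram via the bound $\alpha_j\ge a_1^j+\cdots+a_{s_j}^j$, and both boil the hypothesis down to excluding $\beta_1=\beta_2=1$ (the paper phrases this as $\frac{\beta_1}{\alpha_1}+\frac{\beta_2}{\alpha_2}>1$ forcing some $\beta_i>1$, which turns its inequality chain strict, while you squeeze to equality and then contradict the hypothesis directly). The difference is purely organizational.
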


\begin{proof}
It is easy to see that $\frac{\alpha_{2}}{\alpha_{2} - \beta_{2}} >
\frac{\alpha_{1}}{\beta_{1}}$ is equivalent to say $\frac{\beta_{1}}{\alpha_{1}} + \frac{\beta_{2}}{\alpha_{2}} > 1$.
We know that $\alpha_{1},\alpha_{2} \geq 2$.
Hence, we conclude that either $\beta_{1}$ is not equal to 1 or $\beta_{2}$ is not be equal to 1.
Now suppose that the link $L$ is a $(2,n)$-torus link for some integer $n$. It is clear that the corresponding Montesinos diagram of $L$
is reduced alternating diagram, so this diagram has minimal number of crossings among all other diagrams which has to be
equal to $n$ from the assumption. Therefore, we have $n = a_{1}+a_{2}+\ldots+a_{s}+b_{1}+b_{2}+\ldots+b_{t}$ with
$[a_{1},a_{2},\ldots,a_{s}], [b_{1},b_{2},\ldots,b_{t}]$ being the continued fractions of $\frac{\alpha_{1}}{\beta_{1}}$ and
$\frac{\alpha_{2}}{\beta_{2}}$ of positive integers, respectively. Also we have
\begin{align*}
n = \det(L) =
\alpha_{1}\beta_{2}+\alpha_{2}\beta_{1} & \geq \beta_{2}(a_{1}+a_{2}+\ldots+a_{s}) + \beta_{1}(b_{1}+b_{2}+\ldots+b_{t})\\
& > (a_{1}+a_{2}+\ldots+a_{s}) + (b_{1}+b_{2}+\ldots+b_{t}) =n,
\end{align*}
which is impossible. In conclusion, $L$ cannot be a  $(2,n)$-torus link.

\end{proof}

For the next result, we need the following theorem that is obtained by combining \cite[Theorem\,3.5]{QCQ}
and \cite[Theorem\,1]{I}.
\begin{thm}
The Montesinos link $ L =
M(e;(\alpha_{1},\beta_{1}),(\alpha_{2},\beta_{2}),
\ldots,(\alpha_{r},\beta_{r}) )$ in standard form is
quasi-alternating iff one of the following conditions is satisfied
\begin{enumerate}
\item $ e \leq 0$;
\item $e \geq r$;
\item $e = 1$  with $\frac{\alpha_{i}}{\alpha_{i} - \beta_{i}} > \min \{ \frac{\alpha_{j}}{\beta_{j}}\ | \  j \neq i \}$
for some $1 \leq i \leq r$;
\item $e = r - 1$  with $\frac{\alpha_{i}}{\beta_{i}} > \min \{ \frac{\alpha_{j}}{\alpha_{j} - \beta_{j}}\ | \  j \neq i \}$
for some $1 \leq i \leq r$.
\end{enumerate}
\end{thm}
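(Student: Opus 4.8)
The plan is to establish the two implications from the two cited results separately: the sufficiency (``if'') is exactly \cite[Theorem\,3.5]{QCQ} and the necessity (``only if'') is exactly \cite[Theorem\,1]{I}, so that the real content of the combined statement is checking that both are phrased in the same conventions and that their hypotheses fit together with neither gap nor overlap. Before any casework I would record the mirror-image symmetry of standard-form Montesinos links: sending each tangle of slope $\frac{\beta_i}{\alpha_i}$ to its mirror $\frac{\alpha_i-\beta_i}{\alpha_i}$ and $e\mapsto r-e$ produces a standard-form diagram of the mirror link $\overline{L}$, and since $L\in\mathcal{Q}$ if and only if $\overline{L}\in\mathcal{Q}$, this interchanges condition (1) with (2) and condition (3) with (4). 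It therefore suffices to argue the cases $e\le 0$ and $e=1$ and then apply the mirror.

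For the ``if'' direction I would invoke \cite[Theorem\,3.5]{QCQ}. When $e\le 0$ the standard-form diagram, assembled from the alternating rational tangles of slope $\frac{\beta_i}{\alpha_i}\in(0,1)$, reduces to an alternating diagram; such a diagram is quasi-alternating at every crossing directly from Definition \ref{def}, so $L\in\mathcal{Q}$. For the boundary value $e=1$ under the stated inequality $\frac{\alpha_i}{\alpha_i-\beta_i}>\min\{\frac{\alpha_j}{\beta_j}\mid j\neq i\}$ the link is no longer alternating, and here I would use the generalized twisting construction of \cite[Def.\,2.5]{QCQ}: one realises $L$ as obtained from a quasi-alternating (in fact alternating) Montesinos link by replacing a crossing with a product of rational tangles that extends it, and \cite[Theorem\,2.1]{CK} then keeps the result in $\mathcal{Q}$. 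The inequality is precisely the condition ensuring that the inserted tangle is of the extending type at that crossing.

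For the ``only if'' direction I would cite \cite[Theorem\,1]{I}, which provides the obstruction that a reduced standard-form Montesinos link failing all of (1)--(4) is not quasi-alternating. This is the genuinely topological input: it rests on the fact from \cite{OS} that the branched double cover of a quasi-alternating link is an L-space with Heegaard Floer homology determined by the determinant, together with the classification of which Seifert fibered spaces --- the double covers of Montesinos links --- carry an L-space structure. I would quote Issa's conclusion as a black box, taking care only to translate his framing normalization and his ordering of the fractions into the $e_0=e-\sum_{j}\frac{\beta_j}{\alpha_j}$ bookkeeping supplied by the classification theorem of \cite{Bo,BoSi} recalled above.

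The hard part is not either half in isolation but the reconciliation: after both references are put in a common convention, one must verify that the four conditions are mutually exclusive and jointly exhaustive among reduced standard-form links, and in particular that the strict boundary inequalities in (3) and (4) are matched exactly by Issa's obstruction, with no family of links slipping through uncovered. I would also need to dispose of the cases lying outside the hypotheses of the classification theorem of \cite{Bo,BoSi} --- chiefly the two-tangle links ($r=2$) and links failing $\sum_j \frac{1}{\alpha_j}\le r-2$ --- confirming that in each such case the direct alternating or twisting argument on one side still meets Issa's obstruction on the other.
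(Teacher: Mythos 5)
Your proposal matches the paper exactly: the paper gives no proof of this theorem, stating only that it ``is obtained by combining \cite[Theorem\,3.5]{QCQ} and \cite[Theorem\,1]{I},'' which is precisely the decomposition you describe (sufficiency from the twisting construction of \cite{QCQ}, necessity from Issa's obstruction). Your additional remarks on the mirror symmetry interchanging conditions (1)$\leftrightarrow$(2) and (3)$\leftrightarrow$(4) and on reconciling conventions are sound and, if anything, more careful than the paper's bare citation.
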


\begin{thm}
The coefficients of the Jones polynomial of any quasi-alternating Montesinos link satisfy  $a_{i}a_{i+1} < 0$ for all $0\leq i \leq m-1$.
\end{thm}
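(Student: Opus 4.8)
The plan is to combine the classification of quasi-alternating Montesinos links with Theorem~\ref{new} and Thistlethwaite's theorem \cite{Th}. Because a quasi-alternating link already has an alternating Jones polynomial (so $a_ia_{i+1}\le 0$), it suffices to show that $V_L(t)$ has no gaps; the strict inequality $a_ia_{i+1}<0$ then follows at once. I would put $L$ in standard form and invoke the classification theorem, which leaves four cases according to the value of $e$. Taking the mirror image sends $M(e;\ldots)$ in standard form to $M(r-e;\ldots)$ in standard form and preserves the no-gap property (mirroring only reverses the order of the coefficients of $V_L$), so case $(1)$ is equivalent to case $(2)$ and case $(3)$ to case $(4)$. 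It therefore suffices to treat $e\le 0$ and $e=1$.

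For $e\le 0$ the standard Montesinos diagram is reduced and alternating, which is exactly the fact used in the proofs of Lemmas~\ref{simple1} and \ref{simple2}. Here I would argue directly. Such a link is prime (for $r\ge 3$ by the classical primality of genuine Montesinos links, and for $r\le 2$ because it is a nontrivial two-bridge link), and it is not a $(2,n)$-torus link: Lemma~\ref{simple1} settles $e<0$, Lemma~\ref{simple2} settles $e=0$ with $r=2$, and for $e=0$ with $r\ge 3$ the link is genuinely Montesinos, hence not two-bridge and in particular not a $(2,n)$-torus link. Thistlethwaite's theorem \cite[Theorem\,1(iv)]{Th} then guarantees that a prime alternating link other than a $(2,n)$-torus link has no gaps, which closes this case.

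For $e=1$ the diagram is no longer alternating, and this is where Theorem~\ref{new} is used. The idea is to realize $L=M(1;\ldots)$ through the generalized twisting construction of \cite{QCQ} starting from an alternating quasi-alternating seed $L_0$ sitting in the case $e\le 0$: one reduces the tangles along their continued fractions until the diagram becomes alternating, and the nondegeneracy condition $\frac{\alpha_i}{\alpha_i-\beta_i}>\min_{j\ne i}\frac{\alpha_j}{\beta_j}$ is precisely what guarantees that such a reduction exists and remains quasi-alternating. Indeed, for $r=2$ this condition coincides with the hypothesis of Lemma~\ref{simple2}, which certifies that the seed $L_0$ is not a $(2,n)$-torus link. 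Since $L_0$ then has no gaps by the previous paragraph, Theorem~\ref{new} propagates the no-gap property from $L_0$ up to $L$.

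The main obstacle is the non-alternating case $e=1$ (equivalently $e=r-1$): one must exhibit the correct alternating seed $L_0$ and verify that $L$ is genuinely obtained from it by replacing a single crossing with a product of rational tangles that extends it, so that the hypotheses of Theorem~\ref{new} hold at each stage — in particular the determinant additivity $\det(L^{n})=\det(L)+(n-1)\det(L_\infty)$ that prevents the cancellation of terms. The continued-fraction machinery developed above, namely the recursion for $T(m)$ together with the bound $\alpha\ge a_1+\cdots+a_n$, is exactly what makes this bookkeeping, and the exclusion of $(2,n)$-torus seeds, go through.
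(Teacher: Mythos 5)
Your overall strategy is the same as the paper's: reduce to the no-gap property, use the classification of quasi-alternating Montesinos links, handle the alternating cases by Thistlethwaite's theorem together with Lemmas \ref{simple1} and \ref{simple2}, and propagate the no-gap property to the non-alternating cases via Theorem \ref{new}. The cases $e\leq 0$ and $e\geq r$ are handled essentially as in the paper (your separate treatment of $e=0$ is if anything a bit more careful).

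However, for $e=1$ you stop exactly where the real work is. You acknowledge that ``one must exhibit the correct alternating seed $L_0$ and verify that $L$ is genuinely obtained from it by replacing a single crossing with a product of rational tangles,'' but you do not produce such a seed, and the mechanism you propose --- reducing each tangle along its continued fraction until the diagram becomes alternating --- is not how the construction works and does not obviously land in the hypotheses of Theorem \ref{new}. The paper's resolution is concrete: by \cite[Theorem\,3.5(3)]{QCQ}, the link with $e=1$ is obtained by replacing the single crossing forming the middle tangle of $L_0=M(0;(\alpha_{1},\beta_{1}),(1,1),(\alpha_{i},\beta_{i}-\alpha_{i}))$ (with $\frac{\alpha_{i}}{\alpha_{i}-\beta_{i}}>\frac{\alpha_{1}}{\beta_{1}}$) by a product of rational tangles extending it; one then shows by Montesinos-link equivalences that $L_0\simeq M(-1;(\alpha_{1},\beta_{1}),(\alpha_{i},\beta_{i}-\alpha_{i}))\simeq M(0;(\alpha_{1},\beta_{1}),(\alpha_{i},\beta_{i}))$, which is alternating and, by Lemma \ref{simple2} (whose hypothesis is precisely the condition in case (3) of the classification), not a $(2,n)$-torus link. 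Without identifying this seed and the citation that realizes $L$ as a twist of it, the $e=1$ and $e=r-1$ cases remain unproved. Note also that the continued-fraction estimates ($T(m)$ recursion and $\alpha\geq a_{1}+\cdots+a_{n}$) are used only inside Lemmas \ref{simple1} and \ref{simple2} to exclude $(2,n)$-torus links; they play no role in the ``bookkeeping'' of the twisting construction as you suggest.
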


\begin{proof}
The result holds for the first two cases since the Montesinos link in standard form in these two cases
is an alternating link that is not a $(2,n)$-torus link
for any integer $n$ according to Lemma \ref{simple1}. For the third case, the Montesinos link
can be obtained by replacing the crossing in the middle tangle in
$M(0;(\alpha_{1},\beta_{1}),(1,1),(\alpha_{i},\beta_{i}-\alpha_{i}))$ with $\frac{\alpha_{i}}{\alpha_{i} -\beta_{i}}
> \frac{\alpha_{1}}{\beta_{1}}$ by a product of rational tangles that extends it according to \cite[Theorem\,3.5(3)]{QCQ}.
Now we check that $L=M(0;(\alpha_{1},\beta_{1}),(1,1),(\alpha_{i},\beta_{i}-\alpha_{i}))$ has no gap in the Jones polynomial.
It is enough to show that it is alternating. We know that $L$ is equivalent to
$M(0;(1,1),(\alpha_{1},\beta_{1}),(\alpha_{i},\beta_{i}-\alpha_{i}))$ and this one is equivalent to
$M(-1;(\alpha_{1},\beta_{1}),(\alpha_{i},\beta_{i}-\alpha_{i}))$. Now the last link is equivalent
to the link $M(0;(\alpha_{1},\beta_{1}),(\alpha_{i},\beta_{i}))$ which is alternating since the corresponding
diagram is alternating. The last link is not a $(2,n)$-torus link according to Lemma \ref{simple2}, so its Jones polynomial has no gap.
The last case follows using the same argument after we put the link in standard form. Finally,
the result follows as a direct consequence of Theorem \ref{new}.
\end{proof}

\begin{rem}
The above theorem can be specialized to the case of pretzel links. But rather than using \cite[Theorem\,1]{I}
and \cite[Theorem\,3.5(3)]{QCQ}, we use \cite[Theorem\,1]{G} and \cite[Corollary\,3.6(4)]{QCQ}.
\end{rem}

\section{The Jones polynomial  of Quasi-alternating closed 3-braids}
In this section, we prove that Conjecture \ref{main} holds for prime quasi-alternating links with braid index equal to 3.
Quasi-alternating links of braid index 3 have been classified by Baldwin \cite{B} based on Murasugi's  classification of 3-braids \cite{Mu}.
For $n\geq 2$, let $B_n$ be the  braid group on $n$ strings.  This  group is  generated by the elementary braids
$\sigma_{1}, \sigma_{2}, \dots ,\sigma_{n-1}$ subject to the
following relations:
\begin{align*}
\sigma_{i} \sigma_{j} & =\sigma_{j} \sigma_{i} \mbox{ if } |i-j| \geq 2\\
\sigma_{i} \sigma_{i+1} \sigma_{i} & = \sigma_{i+1}
\sigma_{i}\sigma_{i+1}, \ \forall \ 1 \leq i \leq n-2.
\end{align*}

The two generators $\sigma_1$  and $\sigma_2$ of the braid group
$B_3$ are pictured in Figure \ref{braid}. Recall that every link $L$
in $S^3$ can be obtained as the closure of a certain braid $b$, denoted by $L=\hat b$.
It is well known that  3-braids have been classified, up to
conjugation,  by Murasugi \cite{Mu}.
\begin{figure}
\begin{center}
\includegraphics[width=5cm,height=1.5cm]{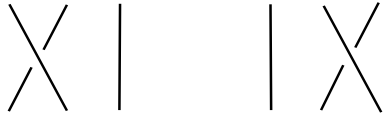}
\caption{The generators $\sigma_1$  and $\sigma_2$ of $B_3$
respectively} \label{braid}
\end{center}
\end{figure}

\begin{thm}\label{thm}
Let $b$ be a 3-braid and let $h=(\sigma_1\sigma_2)^3$ be a full
positive twist. Then $b$ is conjugate to exactly one of the
following:
\begin{enumerate}
\item $h^n\sigma_1^{p_1}\sigma_2^{-q_1}\dots \sigma_1^{p_s}\sigma_2^{-q_s}$, where $s,p_i$ and $q_i$ are positive integers.
\item $h^n\sigma_2^{m}$ where $m \in \Bbb{Z}$.
\item $h^n\sigma_1^{m}\sigma_2^{-1}$, where $m \in \{-1,-2,-3\}$.
\end{enumerate}
\end{thm}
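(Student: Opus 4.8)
The plan is to use the algebraic structure of $B_3$ rather than to argue by hand with braid words. The center of $B_3$ is infinite cyclic, generated by the full twist $h=(\sigma_1\sigma_2)^3=\Delta^2$ with $\Delta=\sigma_1\sigma_2\sigma_1$, and the quotient $B_3/\langle h\rangle$ is isomorphic to the modular group $PSL_2(\mathbb{Z})\cong\mathbb{Z}/2\ast\mathbb{Z}/3$. Concretely I would use the representation $\rho\colon B_3\to SL_2(\mathbb{Z})$ given by $\rho(\sigma_1)=\left(\begin{smallmatrix}1&1\\0&1\end{smallmatrix}\right)$ and $\rho(\sigma_2)=\left(\begin{smallmatrix}1&0\\-1&1\end{smallmatrix}\right)$, which is surjective with $\rho(h)=-I$ and kernel $\langle h^2\rangle$. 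Because $h$ is central, the extension $1\to\langle h\rangle\to B_3\to PSL_2(\mathbb{Z})\to 1$ is central, so two braids are conjugate in $B_3$ exactly when their images are conjugate in $PSL_2(\mathbb{Z})$ and their central powers agree. The first step is to record two conjugacy invariants: the exponent sum $e\colon B_3\to\mathbb{Z}$ with $e(h)=6$, which will recover the integer $n$ once a tail representative is fixed, and the sign-normalized trace $\operatorname{tr}\rho(b)$, which separates classes into the elliptic ($|\operatorname{tr}|<2$), parabolic ($|\operatorname{tr}|=2$), and hyperbolic ($|\operatorname{tr}|>2$) types.

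Second, I would classify conjugacy classes in $G=\langle u\mid u^2\rangle\ast\langle v\mid v^3\rangle$ by the normal form theorem for free products: every element is conjugate to a cyclically reduced word, and two cyclically reduced words are conjugate iff one is a cyclic permutation of the other. The finite-order (elliptic) classes are the finitely many elements conjugate into a factor, namely $u,v,v^2$ and the identity; the parabolic classes are the powers of a single parabolic element; and the hyperbolic classes are the genuinely alternating cyclically reduced words, which therefore involve both generators. This produces the trichotomy of the theorem, and the uniqueness-up-to-cyclic-permutation statement is exactly what will yield the "exactly one'' clause for the generic family.

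Third comes the translation back to the standard generators, using the identities $\Delta\sigma_1\Delta^{-1}=\sigma_2$, $\sigma_1\sigma_2\sigma_1=\Delta$, and $h=(\sigma_1\sigma_2)^3$. I would lift each hyperbolic cyclically reduced word, absorb full twists into powers of $h$, and group consecutive equal-index letters so that the word takes the shape $h^n\sigma_1^{p_1}\sigma_2^{-q_1}\cdots\sigma_1^{p_s}\sigma_2^{-q_s}$ with all $p_i,q_i>0$; this is family (1). The parabolic classes lift to powers of a single generator, giving $h^n\sigma_2^m$ after conjugating $\sigma_1\leftrightarrow\sigma_2$ by $\Delta$; note that the $(2,n)$-torus links, being closures of $\sigma_1^n$, land here with $n=0$, which is why they are the exceptional family in Conjecture \ref{main}. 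The finitely many elliptic classes lift to the remaining periodic braids and, after normalization, to the three exceptional words $h^n\sigma_1^m\sigma_2^{-1}$ with $m\in\{-1,-2,-3\}$; the central power $n$ is then pinned down in each family by the exponent-sum invariant.

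The main obstacle I anticipate is not the abstract classification but the bookkeeping in this translation step, together with the exhaustive treatment of the degenerate (elliptic and parabolic) cases. Converting the symmetric free-product normal form into the asymmetric $\sigma_1^{p}\sigma_2^{-q}$ pattern forces a careful analysis of the boundary words in which a syllable collapses, and it is precisely this analysis that explains why the exponent in family (3) is restricted to $\{-1,-2,-3\}$ — other small values either re-enter family (1) after regrouping or coincide with an already-listed class modulo $h=\Delta^2$ and $v^3=1$. I would settle these finitely many cases by direct computation of $\rho$ and its trace in $SL_2(\mathbb{Z})$, using the trace trichotomy to certify that the elliptic, parabolic, and hyperbolic representatives are pairwise non-conjugate, which completes the proof that $b$ is conjugate to exactly one of the three listed normal forms.
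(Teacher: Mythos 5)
First, a point of reference: the paper does not prove this statement at all --- it is quoted verbatim as Murasugi's classification of $3$-braids up to conjugacy, with a citation to \cite{Mu}. So there is no in-paper argument to compare yours against; what can be assessed is whether your outline would actually deliver the theorem.

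Your overall strategy (pass to the central quotient $B_3/\langle h\rangle\cong PSL_2(\mathbb{Z})\cong \mathbb{Z}/2 * \mathbb{Z}/3$, classify conjugacy classes there by cyclically reduced words, recover the power of $h$ from the exponent sum) is a standard and workable route to Murasugi's result, and the reduction ``conjugate in $B_3$ iff images conjugate in the quotient and exponent sums agree'' is correct since $e$ is injective on $\langle h\rangle$. However, as written this is a plan rather than a proof, and the step you defer is precisely where the content of the theorem lives. Two concrete gaps. (i) Your trichotomy inside the free product is misstated: the parabolic classes are \emph{not} separated from the hyperbolic ones by ``involving both generators'' --- every infinite-order element of $\mathbb{Z}/2 * \mathbb{Z}/3$ is an alternating cyclic word in $u$ and $v^{\pm 1}$ involving both factors, the image of $\sigma_1$ itself being such a word of syllable length $2$. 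The correct dichotomy is between cyclic words whose $\mathbb{Z}/3$-syllables are all $v$ or all $v^2$ (these are the parabolics, lifting to family (2)) and those using both $v$ and $v^2$ (the hyperbolics, lifting to family (1)); it is exactly this distinction that forces the rigid sign pattern $\sigma_1^{p_i}\sigma_2^{-q_i}$ with all $p_i,q_i>0$, and none of that derivation appears in your write-up. (ii) The enumeration producing exactly $m\in\{-1,-2,-3\}$ in family (3), the verification that the three families and the distinct words within family (1) (up to cyclic permutation of the syllable pairs) are pairwise non-conjugate, and the bookkeeping that absorbs copies of $\Delta^2$ correctly while regrouping, are all acknowledged as ``the main obstacle'' and then not carried out. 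Since the theorem is an ``exactly one of'' statement, these omitted verifications are not routine polish; until they are done the proposal does not establish the result.
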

Baldwin classified quasi-alternating closed 3-braids as  in the
following theorem \cite{B}:

\begin{thm}\label{thm}
Let $L$ be a closed 3-braid, then
\begin{enumerate}
\item If $L$ is the closure of $h^n\sigma_1^{p_1}\sigma_2^{-q_1}\dots \sigma_1^{p_s}\sigma_2^{-q_s}$,
where $s, p_i$ and $q_i$ are positive integers, then $L$ is
quasi-alternating if and only if $n \in \{-1,0,1\}$.
\item If $L$ is the closure of $h^n\sigma_2^{m}$, then $L$ is quasi-alternating if and only if
either $n=1$ and $m \in \{-1,-2,-3\}$ or $n=-1$ and $m \in
\{1,2,3\}$.
\item If $L$ is the closure of $h^n\sigma_1^{m}\sigma_2^{-1}$ where $m \in \{-1,-2,-3\}$.
Then $L$ is quasi-alternating  if and only if $n \in \{0,1\}$.
\end{enumerate}
\end{thm}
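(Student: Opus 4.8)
My plan is to prove the two directions by entirely different means: construct the quasi-alternating closures explicitly, and obstruct all the remaining ones through the branched double cover. Throughout I would use Murasugi's classification of 3-braids \cite{Mu} to reduce to the three listed families, and I would first record that quasi-alternating-ness is invariant under mirror image. Since mirroring a word of type (1) sends $h^n\sigma_1^{p_1}\sigma_2^{-q_1}\cdots$ to a word of the same type with $n$ replaced by $-n$, and likewise interchanges the two alternatives in families (2) and (3), it suffices to analyze one representative of each mirror pair; this already accounts for the symmetry visible in the stated conditions.

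For the ``if'' direction I would separate the alternating closures from the genuinely new ones. When $n=0$ in family (1), the word $\sigma_1^{p_1}\sigma_2^{-q_1}\cdots\sigma_1^{p_s}\sigma_2^{-q_s}$ alternates positive $\sigma_1$-blocks with negative $\sigma_2$-blocks, so its closure has a reduced alternating diagram and is therefore quasi-alternating; several small cases in families (2) and (3) are alternating as well. For the remaining admissible values (the single full twist $n=\pm1$, and the listed $m$) I would apply the recursive Definition \ref{def} directly: resolve one crossing of the standard 3-braid diagram so that $L_0$ and $L_\infty$ fall into strictly simpler closures already known to be quasi-alternating, and then verify the determinant additivity $\det(L)=\det(L_0)+\det(L_\infty)$ from the explicit determinant of a 3-braid closure (read off, say, from a reduced Burau or Goeritz matrix). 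Where the resolved crossing is produced by inserting an alternating rational tangle into an alternating parent, the Champanerkar--Kofman construction \cite{CK} underlying Theorem \ref{new} applies verbatim.

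For the ``only if'' direction I would invoke the obstruction that a quasi-alternating link has an L-space branched double cover \cite{OS}. The double cover of $S^3$ branched over a closed 3-braid is a Seifert fibered space over $S^2$ with at most three exceptional fibers, and inserting the central full twist $h=(\sigma_1\sigma_2)^3$ alters a single Seifert invariant (effectively the Euler number) monotonically in $n$. I would compute the Seifert data as explicit functions of $n$ and the braid exponents, and then apply the classification of L-spaces among Seifert fibered spaces: such a manifold fails to be an L-space exactly when it carries a horizontal (taut) foliation, a condition governed by an inequality on its Seifert invariants. The assertion to establish is that this inequality is violated precisely for the excluded values of $n$, so that every excluded closure has a non-L-space branched double cover and hence is not quasi-alternating.

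The crux, and the step I expect to be the main obstacle, is the sharp analysis at the boundary values: separating $n=1$ from $n=2$ in family (1), and pinning down exactly $m\in\{-1,-2,-3\}$ in families (2) and (3). There the L-space inequality degenerates to an equality or near-equality, so the coarse estimate on the Seifert data is inconclusive and one must detect homological thickness precisely. I would settle these finitely many borderline closures by computing their reduced Khovanov homology through the unreduced skein long exact sequence, showing that the total rank strictly exceeds the determinant; this exhibits thickness and, via the Ozsv\'ath--Szab\'o thinness of quasi-alternating links, rules out quasi-alternating-ness. Equivalently one could compute the Heegaard Floer $d$-invariants of the branched double cover at the borderline Seifert data, but the Khovanov computation seems the more self-contained route given the tools assembled in this paper.
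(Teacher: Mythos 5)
First, note that the paper does not prove this statement at all: it is Baldwin's classification of quasi-alternating closed 3-braids, imported verbatim with the citation \cite{B}, so there is no in-paper proof to match your argument against. Judged on its own terms, your outline for the ``if'' direction (mirror symmetry, alternating closures for $n=0$, and an inductive skein resolution with determinant additivity for $n=\pm1$) is reasonable and is in fact close to what Baldwin does. The ``only if'' direction, however, contains a fatal error. You assert that the double cover of $S^3$ branched over a closed 3-braid is a Seifert fibered space over $S^2$ with at most three exceptional fibers, and you hang the entire obstruction on the taut-foliation/L-space criterion for such spaces. That assertion is false: it is Montesinos links whose branched double covers are Seifert fibered, and generic closed 3-braids are not Montesinos links. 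The branched double cover of a closed 3-braid is the total space of an open book with page a once-punctured torus (the lift of the braid monodromy on the 3-punctured disk); these manifolds are typically hyperbolic or otherwise non-Seifert-fibered, which is precisely why Baldwin's paper is devoted to computing $HF^+$ of genus-one, one-boundary-component open books rather than quoting Seifert-fibered L-space classifications. Without the Seifert structure, your ``inequality on Seifert invariants'' does not exist, and the whole obstruction collapses.

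A second, independent gap is your claim that the borderline cases form a finite list. For family (1) the boundary between quasi-alternating and not is $n=\pm1$ versus $n=\pm2$, and for each fixed $n=2$ there are infinitely many words $\sigma_1^{p_1}\sigma_2^{-q_1}\cdots\sigma_1^{p_s}\sigma_2^{-q_s}$; one cannot dispose of them by computing reduced Khovanov homology of ``finitely many borderline closures.'' A uniform structural argument is required for this infinite family --- in Baldwin's actual proof this is carried by the $HF^+$ computation of the relevant open books (determining exactly which are L-spaces) together with a thinness argument, not by finitely many homology calculations. As written, your proposal therefore proves the easy inclusion but leaves the non-quasi-alternating direction essentially open.
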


Now we state the main theorem in this section.
\begin{thm}
The coefficients of the Jones polynomial of any prime quasi-alternating link of braid index 3 satisfy
$a_{i}a_{i+1} < 0$ for all $0\leq i \leq m-1$.
\end{thm}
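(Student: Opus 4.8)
The plan is to reduce the claimed strict inequality to a \emph{no-gaps} statement and then dispatch Baldwin's three families (Theorem~\ref{thm}) separately. Since $L$ is quasi-alternating, its reduced Khovanov homology is thin, so by \cite{MO} the coefficients of $V_L(t)$ already satisfy $a_i a_{i+1}\le 0$; hence it suffices to prove that $V_L(t)$ has no gaps, i.e. $a_i\neq 0$ for $0\le i\le m$, since this upgrades $a_ia_{i+1}\le 0$ to $a_ia_{i+1}<0$, which is exactly the assertion. Two conventions would be recorded first: mirroring sends $V_L(t)$ to $V_L(t^{-1})$ and so preserves both the alternating and the no-gaps properties, so any convenient chirality may be assumed; and the hypothesis that $L$ has braid index exactly $3$ rules out the $(2,n)$-torus links (whose braid index is at most $2$), as well as the degenerate cases in which a lone $\sigma_2^{\pm1}$ Markov-destabilizes the $3$-braid down to a $2$-braid closing to some $T(2,n)$. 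Thus every link considered is prime and not a $(2,n)$-torus link, so Thistlethwaite's theorem \cite[Theorem\,1(iv)]{Th} applies to any of them that happens to be alternating.

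For Family (1) I would split on the power of the full twist. When $n=0$, the closure of $\sigma_1^{p_1}\sigma_2^{-q_1}\cdots\sigma_1^{p_s}\sigma_2^{-q_s}$ with all $p_i,q_i\ge 1$ is alternating, because consecutive twist regions have opposite handedness and the standard braid diagram is then alternating (the figure-eight $\widehat{(\sigma_1\sigma_2^{-1})^2}$ and the Borromean rings $\widehat{(\sigma_1\sigma_2^{-1})^3}$ are typical instances); being prime and not a $(2,n)$-torus link, such an $L$ has no gaps by \cite[Theorem\,1(iv)]{Th}. When $n=\pm1$ the links are genuinely non-alternating, so I would instead use the twisting machinery: for fixed $n$, the base link $\widehat{h^{n}\sigma_1\sigma_2^{-1}}$ (the case $s=1$, $p_1=q_1=1$) is a single small quasi-alternating knot, and every other member of the family is obtained from it by replacing the $\sigma_1$, resp.\ $\sigma_2^{-1}$, crossing by a product of rational tangles that extends it. Extending a crossing to a twist region realizes $\sigma_1\mapsto\sigma_1^{p_1}$ and $\sigma_2^{-1}\mapsto\sigma_2^{-q_1}$, while the product-of-rational-tangles construction of \cite[Def.\,2.5]{QCQ} inserts the further blocks $\sigma_1^{p_2}\sigma_2^{-q_2},\dots$. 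After verifying by direct computation that the finitely many base links have no gaps and are quasi-alternating at the crossing to be twisted, Theorem~\ref{new} propagates the no-gaps property to the whole family.

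Families (2) and (3) are finite: Baldwin's constraints leave the six closures $\widehat{h^{\pm1}\sigma_2^{m}}$ with $|m|\le 3$, and the six closures $\widehat{h^{n}\sigma_1^{m}\sigma_2^{-1}}$ with $n\in\{0,1\}$ and $m\in\{-1,-2,-3\}$. I would enumerate these, discard any that fail to be prime, fail to have braid index $3$, or are $(2,n)$-torus links (none of which are covered by the statement), and verify the no-gaps property for the survivors directly from their Jones polynomials.

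The hard part will be Family (1) with $n=\pm1$. Specifically, I must (i) pin down the base knots and confirm their Jones polynomials have no gaps; (ii) confirm they are quasi-alternating \emph{at the precise crossing} to be twisted, since Baldwin's classification only guarantees quasi-alternating as links rather than at a prescribed crossing; and (iii) check that the replacement of \cite{CK,QCQ} genuinely sweeps out every parameter choice $(s;p_1,q_1,\dots,p_s,q_s)$ while keeping the result inside the family with the same $n$. Once this combinatorial bookkeeping is in place, Theorem~\ref{new} supplies the analytic content and, together with the reduction in the first paragraph, yields $a_ia_{i+1}<0$ for all $0\le i\le m-1$.
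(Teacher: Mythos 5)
Your reduction to a no-gaps statement, your treatment of the $n=0$ case, and your plan for Baldwin's families (2) and (3) are all fine, but the core of your argument --- family (1) with $n=\pm1$ --- fails at its starting point. The base link you propose to twist from, $\widehat{h\sigma_1\sigma_2^{-1}}$, is not a gap-free quasi-alternating knot: elementary braid moves (conjugation, the braid relation, and one Markov destabilization) turn $(\sigma_1\sigma_2)^3\sigma_1\sigma_2^{-1}$ into the $2$-braid $\sigma_1^{5}$, so this closure is the $(2,5)$-torus knot; similarly $\widehat{h^{-1}\sigma_1\sigma_2^{-1}}$ is its mirror. The Jones polynomial of $T(2,5)$ is $-t^{-7}+t^{-6}-t^{-5}+t^{-4}+t^{-2}$ (up to mirroring), which has a gap, so the hypothesis of Theorem~\ref{new} is violated and the no-gaps property cannot be propagated from this base --- the verification you promise in item (i) of your last paragraph would come out false. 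Nor is this an unlucky choice of base: \emph{every} member of family (1) with $n=1$ and $q_1+\cdots+q_s\le 2$ has a gap in its Jones polynomial, because all such closures are $(2,n)$-torus links or connected sums of two such links. Consequently any twisting argument must start only from bases with $q_1+\cdots+q_s\ge 3$, must verify quasi-alternating-ness at the specific crossing being replaced (Baldwin's classification does not hand you a distinguished crossing), and must show the product-of-rational-tangles moves of \cite{CK,QCQ} reach every parameter vector with $q\ge 3$; this bookkeeping, which you explicitly defer, is exactly the missing mathematical content.

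For comparison, the paper avoids twisting altogether in this section and argues directly: using Birman's formula $V_{\hat\alpha}(t)=(-\sqrt t)^{e_\alpha}\bigl(t+t^{-1}+\mathrm{tr}(\psi_t(\alpha))\bigr)$ for the Burau representation $\psi_t$, it computes $\psi_t(\sigma_1^{p_i}\sigma_2^{-q_i})$ explicitly and observes that every entry is alternating, gap-free, and has the coefficient of $t^i$ of sign $(-1)^i$; this sign coherence forbids cancellation in products and in the trace, so $\mathrm{tr}(\psi_t(\beta))$ is alternating and gap-free with powers running from $-q$ to $p$. Adding $t+t^{-1}$ and multiplying by $t^3$ then produces no gap precisely when $q\ge 3$, while the cases $q=1,2$ --- where gaps genuinely occur --- are identified by braid manipulations as $(2,n)$-torus links or connected sums thereof, hence excluded by the primality and braid-index hypotheses. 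If you wish to salvage your approach, you would need to rebuild it on gap-free bases such as $\widehat{h\sigma_1\sigma_2^{-3}}$, $\widehat{h\sigma_1\sigma_2^{-2}\sigma_1\sigma_2^{-1}}$, and $\widehat{h(\sigma_1\sigma_2^{-1})^3}$, and carry out items (i)--(iii) in full; as written, the proof does not go through.
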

\begin{proof}
The Jones polynomial of a closed 3-braid can be computed using a relatively simple formula introduced by
Birman \cite{Bi}. Given a 3-braid  $\alpha$,  let $e_\alpha$ be the exponent sum of $\alpha$ as a word in the
elementary braids $\sigma_1$ and $\sigma_2$.   Let $\psi_t: B_{3}
\longrightarrow GL(2, \mathbb{Z}[t,t^{-1}])$ be the Burau
representation defined on the generators of $B_3$ by $\psi_t(\sigma_1)=\left [\begin{array}{cc}
-t&1\\
0&1
\end{array} \right ] $ and $\psi_t(\sigma_2)=\left [\begin{array}{cc}
1&0\\
t&-t
\end{array} \right ] $. 

Then, the Jones polynomial of the link $\hat\alpha$ is given by the following formula
 $$
 V_{\hat\alpha}(t)=(-\sqrt{t})^{e_\alpha}(t+t^{-1}+tr(\psi_t(\alpha))),$$

where   $tr$ denotes the usual matrix-trace function.\\
Let us consider  the Jones polynomial of  a braid of type 1 in Baldwin's Theorem. If $n=0$,
then the link is alternating and the Conjecture holds. We will now consider the case $n=1$. The case
$n=-1$ is treated in a similar way.  Assume that $\alpha=h\beta$, where
$\beta=\sigma_1^{p_1}\sigma_2^{-q_1}\dots \sigma_1^{p_s}\sigma_2^{-q_s}$.  Since $\psi_t(h)=t^3I_2$,
then  $tr(\psi_t(\alpha))=t^3tr(\psi_t(\beta))$. For any positive integers $p_i$ and $q_i$, elementary computations show that:

$\psi_t(\sigma_1^{p_i})=\left [\begin{array}{cc}
(-t)^{p_i}&\displaystyle\frac{1-(-t)^{p_i}}{1+t}\\
&\\
0&1
\end{array} \right ] $, and
$\psi_t(\sigma_2^{-q_i})=\left [\begin{array}{cc}
1&0\\
&\\
\displaystyle\frac{1-(-t)^{-q_i}}{1+\displaystyle\frac{1}{t}}&(-t)^{-q_i}
\end{array} \right ] $.

Consequently

 $$\psi_t(\sigma_1^{p_i}\sigma_2^{-q_i})=
\left [\begin{array}{cc}
(-t)^{p_i}+\displaystyle\frac{1-(-t)^{p_i}}{1+t}\frac{1-(-t)^{-q_i}}{1+\displaystyle\frac{1}{t}}& (-t)^
   {-q_i}\displaystyle\frac{1-(-t)^{p_i}}{1+t}\\
   &\\
\displaystyle\frac{1-(-t)^{-q_i}}{1+\displaystyle\frac{1}{t}}& (-t)^{-q_i}
\end{array} \right ] $$

Note that the matrix  $\psi_t(\sigma_1^{p_i}\sigma_2^{-q_i})$ above   is of the form $\left [\begin{array}{cc}
A_{p_i,q_i}(t)&B_{p_i,q_i}(t)\\
C_{p_i,q_i}(t)&D_{p_i,q_i}(t)
\end{array} \right ]$, where the four entries are Laurent polynomials, each of which  is  alternating and has
no gaps. Moreover, in each of these polynomials the coefficient of the monomial $t^i$ has the same sign as $(-1)^{i}$.
Since $\beta$ is a product of braids of the form $\sigma_1^{p_i}\sigma_2^{-q_i}$, then each  entry of the matrix
$\psi_t(\beta)$ is indeed a sum of  products of elements of type $A_{p_i,q_i}(t), B_{p_i,q_i}(t),
C_{p_i,q_i}(t)$ and $D_{p_i,q_i}(t)$. Consequently, each of the entries of  $\psi_t(\beta)$ is alternating
with no gaps and for each $i$ the sign of the coefficient of the  monomial  $t^i$ is the same as $(-1)^i$.
Moreover, one can see that  $tr(\psi_t(\beta))$ is a Laurent polynomial which is alternating. When considering the sum
of diagonal entries of $\psi_t(\beta)$, no cancellation will happen  because  of the observation above about the
sign of each monomial.

Consequently,  the polynomial $tr(\psi_t(\beta))$  will be alternating with no gaps. In addition, from the computation above,
we can  see that the maximum power of $t$ that appears  in $tr(\psi_t(\beta))$  is $p=p_1+p_2+\dots+p_s$, while
the minimum power is $-(q_1+q_2\dots+q_s)=-q$. If we consider the expression $t+t^{-1}+t^3\psi_t(\beta)$ then
it is  clear that no gaps will appear in this expression if $q>2$ because the sign of the monomials $t$ and
$t^{-1}$ is positive in $t^3\psi_t(\beta)$. Moreover, the minimal power in $t^3\psi_t(\beta)$ is $-q+3 \leq 0$.

It remains now to check  the cases $q=1$ and $q=2$. In these cases, the Jones polynomials will have gaps. Indeed,
the constant term in $t^{-1}+t+t^3\psi_t(\beta)$ will be zero. We  shall  prove that in both  cases
the link $\hat\alpha$ will be either a $(2,n)$-torus link or a connected sum  of such links. Two  cases are to be considered.\\
If $q=1$, then
$$\begin{array}{rl}
(\sigma_1\sigma_2)^3\sigma_1^{p_1}\sigma_2^{-1}&\equiv
\sigma_1\sigma_2 \sigma_1\sigma_2\sigma_1\sigma_2 \sigma_1^{p_1}
\sigma_2^{-1}\\&\equiv \sigma_2\sigma_1
\sigma_2\sigma_1\sigma_2\sigma_1 \sigma_1^{p_1} \sigma_2^{-1}\\
& \equiv \sigma_1\sigma_2\sigma_1\sigma_2\sigma_1 \sigma_1^{p_1} \\
& \equiv \sigma_1\sigma_1\sigma_2\sigma_1\sigma_1^{p_1+1}\\
& \equiv \sigma_1^{p_1+4}.\\
\end{array}$$
where the symbol$\equiv$ is used to indicate that the closure of the braids are the same.  Thus,
$\hat\alpha$ is the torus link of type $(2,p_1+4)$.
If $q=2$, then   there are two subcases to be considered. If
$\alpha=(\sigma_1\sigma_2)^3\sigma_1^{p_1}\sigma_2^{-2}$, we have:
$$\begin{array}{rl}
(\sigma_1\sigma_2)^3\sigma_1^{p_1}\sigma_2^{-2}&\equiv
\sigma_1\sigma_2 \sigma_1\sigma_2\sigma_1\sigma_2 \sigma_1^{p_1}
\sigma_2^{-2}\\
&\equiv \sigma_2\sigma_1
\sigma_2\sigma_1\sigma_2\sigma_1 \sigma_1^{p_1} \sigma_2^{-2}\\
& \equiv \sigma_1\sigma_2\sigma_1\sigma_2\sigma_1 \sigma_1^{p_1}\sigma_2^{-1} \\
& \equiv \sigma_2\sigma_1\sigma_2\sigma_2\sigma_1 \sigma_1^{p_1}\sigma_2^{-1} \\
& \equiv \sigma_1\sigma_2\sigma_2\sigma_1 \sigma_1^{p_1} \\
& \equiv \sigma_2^2\sigma_1^{p_1+2}\\
\end{array}$$

Hence, $\hat\alpha$ is the connected sum of the torus link of type $(2,p_1+2)$ and the Hopf link.
The second subcase concerns the closure of
$\alpha=(\sigma_1\sigma_2)^3\sigma_1^{p_1}\sigma_2^{-1}\sigma_1^{p_2}\sigma_2^{-1}$.
Similar elementary calculations  in the braid group show that:

$$\begin{array}{rl}
(\sigma_1\sigma_2)^3\sigma_1^{p_1}\sigma_2^{-1}\sigma_1^{p_2}\sigma_2^{-1}&\equiv
\sigma_1\sigma_2 \sigma_1\sigma_2\sigma_1\sigma_2 \sigma_1^{p_1}
\sigma_2^{-1}\sigma_1^{p_2}\sigma_2^{-1}\\
&\equiv \sigma_2\sigma_1
\sigma_2\sigma_1\sigma_2\sigma_1 \sigma_1^{p_1}\sigma_2^{-1}\sigma_1^{p_2}\sigma_2^{-1}\\
& \equiv \sigma_2\sigma_1\sigma_2 \sigma_1^{p_1+1}\sigma_2^{-1}\sigma_1^{p_2+1} \\
& \equiv \sigma_1\sigma_2\sigma_1 \sigma_1^{p_1+1}\sigma_2^{-1}\sigma_1^{p_2+1} \\
& \equiv \sigma_2 \sigma_1^{p_1+2}\sigma_2^{-1}\sigma_1^{p_2+2} \\
\end{array}$$

Finally, it is not difficult to see that the closure of $\sigma_2 \sigma_1^{p_1+2}\sigma_2^{-1}\sigma_1^{p_2+2}$
is the connected sum of the torus links of type $(2,p_1+2)$ and $(2,p_2+2)$, see Figure \ref{connectedsumtorus}.

 \begin{figure}
\begin{center}
\includegraphics[width=3cm,height=4.5cm]{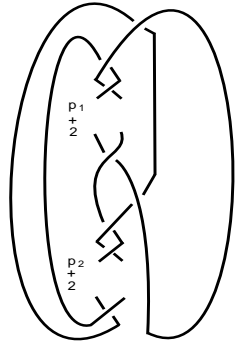}
\caption{The closure of braid $\sigma_2 \sigma_1^{p_1+2}\sigma_2^{-1}\sigma_1^{p_2+2}$} \label{connectedsumtorus}
\end{center}
\end{figure}
The conjecture then holds for closed 3-braids as in  the first case of Baldwin's Theorem, with $n=1$. For the case $n=-1$,
the proof can be done in a similar way. It is easy to check that cases 2 and 3 in Baldwin's Theorem yield either a torus
link of type $(2,n)$ for some $n$ or a connected sum of two links of this type.
\end{proof}

\end{document}